\documentclass[12pt, leqno]{article}

\usepackage{amsmath,amsthm}
\usepackage{amssymb}

\newtheorem{theorem}{Theorem}[section]
\newtheorem{corollary}[theorem]{Corollary}
\newtheorem{lemma}[theorem]{Lemma}

\theoremstyle{definition}

\newtheorem{remark}[theorem]{Remark}

\numberwithin{equation}{section}

\textwidth=13.5cm
\textheight=23cm
\parindent=16pt

\topmargin=-0.5cm

\def\Z{{\mathbb Z}}
\def\R{{\mathbb R}}

\def\I{{\mathcal I}}
\def\L{{\mathcal L}}

\def\S{{\mathcal S}}
\def\T{{\mathcal T}}
\def\W{{\mathcal W}}

\begin{document}

\baselineskip=17pt

\title{The larger sieve and polynomial congruences}

\author{Patrick Letendre\\
D\'ep. de  math\'ematiques et de statistique\\ 
Universit\'e Laval\\
Qu\'ebec\\
Qu\'ebec G1V 0A6\\
Canada\\
Patrick.Letendre.1@ulaval.ca}

\date{}

\maketitle

\renewcommand{\thefootnote}{}

\footnote{2010 \emph{Mathematics Subject Classification}: 11A07, 11N35.}

\footnote{\emph{Key words and phrases}: larger sieve, polynomial congruences.}

\renewcommand{\thefootnote}{\arabic{footnote}}
\setcounter{footnote}{0}

\begin{abstract}
We obtain a small improvement of Gallagher's larger sieve and we extend it to higher dimensions. We also obtain two interesting upper bounds for the number of solutions to polynomial congruences.
\end{abstract}

\section{Introduction}

In his paper of 1971, Gallagher introduced a new tool in number theory that is now known as {\it the larger sieve} and also as {\it Gallagher's larger sieve}. As indicated by its name, it is a complementary inequality to the large sieve. More precisely, let $\S$ be a set of integers in an interval of length $M$ for which there exists a set $\mathcal{Q}$ of prime powers $q=p^{\alpha_p}$ such that each numbers $n \in \S$ belong to at most $\nu(q)$ congruence classes modulo $q$. Then
\begin{eqnarray}\label{gall}
\# \S \le \frac{\sum_{q \in \mathcal{Q}} \Lambda(q)-\log M}{\sum_{q \in \mathcal{Q}}\frac{\Lambda(q)}{\nu(q)} - \log M}
\end{eqnarray}
holds if the denominator is positive. Here, $\Lambda(\cdot)$ denote the classical von Mangoldt function, that is
$$
\Lambda(q) = \left\{\begin{array}{cl} \log p & \mbox{if}\ q=p^j\ \mbox{for some prime}\ p\ \mbox{and}\ j \ge 1,\\
0 & \mbox{otherwise}.
\end{array}\right.
$$
Inequality \eqref{gall} has been proven to be stronger than the large sieve when most of the values of $\nu(q)$ are small, see \cite{pxg}. In the book \cite{jf:hi}, the authors propose a generalization of the larger sieve. Some related results and a discussion can be found in \cite{esc:ce}.

A seemingly unrelated subject with above is the study of polynomial congruences. Let $f(x) := a_nx^n + \cdots + a_1x + a_0 \in \Z[x]$ be a polynomial of degree $n \ge 2$ and $q \ge 2$ be an integer satisfying
$$
\gcd(a_n,\dots,a_0,q)=1.
$$
It has been established in \cite{svk} that the number of solutions $N(f,q)$ to the equation
\begin{eqnarray}
f(x) \equiv 0 \pmod{q} \qquad (x=1,\dots,q)
\end{eqnarray}
satisfies
\begin{eqnarray}
N(f,q) \le \left(\frac{n}{e}+O((\log n)^2)\right)q^{1-\frac{1}{n}}.
\end{eqnarray}
It is also shown to be essentially best possible since there are infinitely many polynomials $f(x)$ and values of $q$ for which
$$
N(f,q) > \left(\frac{n}{e}+c_1\log n\right)q^{1-\frac{1}{n}}
$$
for some constant $c_1 > 0$.

It raises the question: How many solutions $x$ of $f(x) \equiv 0 \pmod{q}$ can we find in an interval $\I$ of length $q^{\frac{1}{n}}$? In Theorem 3 of \cite{svk:ts}, an answer has been given and it is of the shape $\ll \log q$. By studying the argument of the demonstration of this theorem, we have been led to a small improvement in the case where $n$ is considered as fixed. Also, our research has led us to an improvement of the inequality \eqref{gall} as well as a generalization to higher dimensions.

Throughout the paper, we often write $\S$, with or without subscript, to denote a set of integer points in $\mathbb{Z}^m$ for some $m \ge 1$. When it is the case, we often write $S$ to denote $\#\S$ with the same subscript. For any integer $q \ge 1$, the functions $\phi(q)$ and $\omega(q)$ are respectively the Euler's phi function and the number of distinct prime divisors of $q$. For any integer $q \ge 1$ and prime $p$, let's denote by $\mathrm{v}_p(q)$ the unique integer $\alpha_p \ge 0$ for which $p^{\alpha_p} \| q$. For two integers $q$ and $\Delta$ and a real number $\alpha$, we write $q^\alpha \mid \Delta$ to signify that $\alpha\mathrm{v}_p(q) \le\mathrm{v}_p(\Delta)$ for each primes $p$.

\section{Statements of theorems}

For each integer $s \ge 2$, let's define
$$
c_s:=\prod_{j=1}^{s}\left(\frac{(j-1)^{2(j-1)}j^j}{(s+j-2)^{s+j-2}}\right)^{\frac{1}{s(s-1)}}.
$$
\begin{theorem}\label{thm:1}
Let $\S$ be a set of integers in the interval $[N,N+M]$ with $M > 0$. Let also $\mathcal{Q}$ be a finite set of pairwise coprime integers. Suppose that for each $q \in \mathcal{Q}$ the integers $n \in \S$ belong to at most $1 \le \nu(q) \le q$ congruence classes modulo $q$. Then, if the denominator is positive, the inequality
\begin{eqnarray}\label{thm_1}
S \le \frac{\sum_{q \in \mathcal{Q}} \log q-\log(c_SM)}{\sum_{q \in \mathcal{Q}}\frac{\log q}{\nu(q)} - \log(c_SM)}
\end{eqnarray}
holds.
\end{theorem}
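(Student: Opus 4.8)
The classical larger sieve (Gallagher's inequality \eqref{gall}) is proved by a counting argument: one counts, in two ways, pairs $(n, n')$ of elements of $\mathcal{S}$ that are congruent modulo $q$, summed over $q \in \mathcal{Q}$ with weights $\Lambda(q)$. The key elementary input is that if $n \ne n'$ lie in $\mathcal{S} \subseteq [N, N+M]$, then the product $\prod_{q \in \mathcal{Q},\, n \equiv n' \,(q)} q$ divides $n - n'$ (using that the $q$'s are pairwise coprime), hence is at most $M$; taking logarithms gives $\sum_{q:\, n \equiv n'} \log q \le \log M$. The improvement here replaces $\log M$ by $\log(c_S M)$, so the plan is to sharpen this divisibility/size estimate: rather than bounding a single difference $|n - n'| \le M$, I would exploit that $S$ elements of an interval of length $M$ cannot all be pairwise close, so a suitable symmetric function of the pairwise differences is appreciably smaller than the trivial bound $M^{\binom{S}{2}}$.

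**Main steps.** First I would write down the double count: for each $q \in \mathcal{Q}$, the $S$ elements of $\mathcal{S}$ fall into at most $\nu(q)$ classes mod $q$, so by convexity the number of ordered pairs $(n, n')$ with $n \equiv n' \pmod q$ is at least $S^2/\nu(q)$; subtracting the $S$ diagonal pairs, the number of unordered pairs with $n \equiv n' \pmod q$ and $n \ne n'$ is at least $\tfrac12(S^2/\nu(q) - S)$. Weighting by $\log q$ and summing over $\mathcal{Q}$ gives
$$
\sum_{\{n,n'\}, n \ne n'} \ \sum_{\substack{q \in \mathcal{Q}\\ q \mid n - n'}} \log q \ \ge\ \frac{S^2 - S}{2}\sum_{q \in \mathcal{Q}} \frac{\log q}{\nu(q)} - \frac{S^2 - S}{2}\left(\sum_{q\in\mathcal Q}\log q - \sum_{q \in \mathcal{Q}} \frac{\log q}{\nu(q)}\right),
$$
i.e. after rearranging, $\sum_{\{n,n'\}} \log\big(\prod_{q \mid n-n'} q\big) \ge \binom{S}{2}\big(2\sum \tfrac{\log q}{\nu(q)} - \sum \log q\big)$ — I will of course arrange the bookkeeping to match \eqref{thm_1}. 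For the upper bound, I need
$$
\prod_{\{n,n'\},\, n \ne n'} \ \prod_{\substack{q \in \mathcal{Q}\\ q \mid n-n'}} q \ \le\ \prod_{\{n,n'\}} |n - n'| \ =\ \Big|\prod_{i < j}(n_i - n_j)\Big|,
$$
the absolute value of a Vandermonde-type product. The crucial lemma I would establish (or invoke) is: if $n_1 < \dots < n_S$ all lie in an interval of length $M$, then $\prod_{i<j}|n_i - n_j| \le (c_S M)^{\binom{S}{2}}$, with $c_S$ exactly the constant defined before the theorem. Combining the two bounds,
$$
\binom{S}{2}\Big(2\sum_{q}\tfrac{\log q}{\nu(q)} - \sum_q \log q\Big) \ \le\ \binom{S}{2}\log(c_S M)^{2} \ \Big/\ \cdots
$$
— again I'll fix constants so that the final rearrangement is precisely \eqref{thm_1}, solving the resulting linear inequality in $S$ and using positivity of the denominator to divide.

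**The main obstacle.** The analytic heart is the Vandermonde estimate: maximizing $\prod_{i<j}|x_i - x_j|$ over $x_1, \dots, x_S$ in a fixed interval. The continuous maximum is attained at the zeros of a Jacobi-type polynomial (Stieltjes' electrostatic characterization), and the maximal value, after normalizing the interval to $[0,1]$, is a ratio of products of factorials / power terms — this is exactly where the somewhat baroque formula for $c_s$ comes from, via the discriminant of the relevant Jacobi polynomials. I would expect to either quote the known closed form for $\max \prod_{i<j}(x_i-x_j)^2$ on $[-1,1]$ (equivalently the Selberg-integral / Jacobi-discriminant evaluation) and then translate to an interval of length $M$, picking up the factor $M^{2\binom{S}{2}}$, or to verify directly by Lagrange multipliers that the optimum satisfies the three-term recurrence forcing the $x_i$ to be Jacobi zeros. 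A secondary subtlety is that $\mathcal{S}$ consists of \emph{integers}, so the true discrete maximum is a bit smaller than the continuous one; but since we only need an upper bound, relaxing to real $x_i$ in the interval is harmless and in fact is what makes $c_S$ the clean constant appearing in the statement. Care is also needed that the denominator $\sum_q \log q/\nu(q) - \log(c_S M)$ being positive is genuinely used when dividing through, exactly as in the classical case.
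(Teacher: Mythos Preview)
Your approach is essentially identical to the paper's: bound the Vandermonde product $\Delta=\prod_{i<j}(x_j-x_i)$ from above by $(c_S M)^{\binom{S}{2}}$ (the paper records this maximum as Lemma~\ref{lem1}, quoting Andrews--Askey--Roy, which is exactly the Selberg/Jacobi-discriminant evaluation you anticipate), and from below by $\prod_{q\in\mathcal Q} q^{S^2/(2\nu(q))-S/2}$ via Cauchy--Schwarz on the class sizes, then take logarithms and solve for $S$. Your displayed inequalities have the bookkeeping slightly off (as you yourself flag), but the correct arithmetic---$\frac{S^2}{2}\sum_q\frac{\log q}{\nu(q)}-\frac{S}{2}\sum_q\log q \le \binom{S}{2}\log(c_S M)$---rearranges directly to \eqref{thm_1}.
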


\begin{remark}
We show in Lemma \ref{lem2} that $c_s$ is essentially $\frac{1}{4}+\epsilon(s)$ so that the first term in the denominator of \eqref{thm_1} can be about $\log 4 = 1.386\dots$ smaller than in \eqref{gall} and still have the inequality effective. One can see directly from the proof that inequality \eqref{thm_1} is at least as good as \eqref{gall} provided $S \ge \max_{q \in \mathcal{Q}}\nu(q)$. Also, an inequality like \eqref{thm_1} can be stated with the function $\Lambda(\cdot)$ replacing $\log(\cdot)$ in both sums, in which case we get an inequality that is always at least as good as \eqref{gall}.
\end{remark}

\begin{corollary}
Assume that we are in the situation of Theorem \ref{thm:1}. We either have $S \le 1243$ or we have that
\begin{equation}\label{cor_1-const}
S < \frac{\sum_{q \in \mathcal{Q}} \Lambda(q)-\log M+1.38}{\sum_{q \in \mathcal{Q}}\frac{\Lambda(q)}{\nu(q)} - \log M+1.38}
\end{equation}
holds if the denominator is positive.
\end{corollary}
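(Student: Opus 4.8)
The proof of the corollary should be a fairly direct deduction from Theorem~\ref{thm:1} together with the asymptotic description of $c_s$ promised in Lemma~\ref{lem2}. The plan is to start from inequality~\eqref{thm_1}, which we may rewrite with the von Mangoldt function in place of $\log q$: since $\mathcal{Q}$ consists of pairwise coprime integers, the remark following Theorem~\ref{thm:1} allows us to replace $\log q$ by $\Lambda(q)$ in both sums at the cost of an inequality that is still valid (indeed stronger), so it suffices to bound $\log(c_S M)$ from below by $\log M - 1.38$, i.e.\ to show $c_S \ge e^{-1.38}$ once $S$ is large enough. From Lemma~\ref{lem2} we know $c_s \to \tfrac14$ as $s\to\infty$, and more precisely $c_s = \tfrac14 + \epsilon(s)$ with an explicit correction term; since $e^{-1.38} = 0.2516\dots < \tfrac14 = 0.25$, wait — one needs $c_S \ge e^{-1.38}$, and $e^{-1.38}\approx 0.2516 > 0.25$, so in fact the bound goes the other way asymptotically. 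The correct reading is that $\epsilon(s) > 0$ for all relevant $s$ and decreases to $0$, so $c_s$ is slightly larger than $\tfrac14$, and the threshold $S \le 1243$ is exactly the cutoff below which $c_S$ might dip under $e^{-1.38}$.

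Concretely, I would proceed as follows. First, invoke the form of Theorem~\ref{thm:1} with $\Lambda(\cdot)$ (justified by the remark), giving
\begin{equation*}
S \le \frac{\sum_{q \in \mathcal{Q}} \Lambda(q)-\log(c_SM)}{\sum_{q \in \mathcal{Q}}\frac{\Lambda(q)}{\nu(q)} - \log(c_SM)}.
\end{equation*}
Next, observe that $-\log(c_S M) = -\log M - \log c_S$, so the claimed inequality~\eqref{cor_1-const} follows provided $-\log c_S \le 1.38$, that is $c_S \ge e^{-1.38}$. (One must also check the elementary fact that replacing a quantity $-\log(c_SM)$ added to both numerator and denominator of a fraction of the form $\frac{A - t}{B - t}$ with $A \ge B$ by a smaller quantity only increases the fraction, provided the denominator stays positive — this is the standard monotonicity argument used in these sieve inequalities, and it is what lets us pass from an equality-threshold to the strict inequality with the cleaner constant $1.38$.) Then the whole corollary reduces to the numerical claim: $c_S \ge e^{-1.38}$ for all $S \ge 1244$, while for $S \le 1243$ we simply fall into the first alternative.

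The main work, then, is the numerical verification that $c_s \ge e^{-1.38} = 0.25158\dots$ for $s \ge 1244$. Here I would use the explicit formula for $c_s$ and the analysis from Lemma~\ref{lem2}: write $\log c_s = \frac{1}{s(s-1)}\sum_{j=1}^{s}\bigl(2(j-1)\log(j-1) + j\log j - (s+j-2)\log(s+j-2)\bigr)$, approximate the sums by integrals (Euler–Maclaurin, or direct comparison), and extract that $\log c_s = -\log 4 + O\!\left(\frac{\log s}{s}\right)$ with a correction term one can show is eventually nonnegative — or at least bounded below by $-1.38 + \log 4 = 0.0064\dots$ in absolute terms — for $s$ beyond the stated threshold. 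The delicate point, and the one I expect to be the main obstacle, is getting the threshold down to exactly $1243$ rather than some larger constant: this requires either a genuinely sharp version of the error term in the asymptotic for $c_s$ or an honest (machine-assisted) check of the monotone tail behaviour of the explicit product defining $c_s$, ensuring $c_s$ has by then risen above $e^{-1.38}$ and stays there. Once that numerical fact is in hand, the deduction of~\eqref{cor_1-const} is immediate.
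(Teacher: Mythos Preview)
Your overall plan is right --- Theorem~\ref{thm:1} in its $\Lambda$-version, Lemma~\ref{lem2}, and a monotonicity argument --- but the key inequality on $c_S$ is pointing the wrong way. Write the bound from Theorem~\ref{thm:1} as $S\le h(c)$ with
\[
h(c)=\frac{A+c}{B+c},\qquad A=\sum_{q}\Lambda(q)-\log M,\quad B=\sum_{q}\frac{\Lambda(q)}{\nu(q)}-\log M,
\]
and $c=-\log c_S$. Since $A\ge B$ and $B+c>0$, one has $h'(c)=\frac{B-A}{(B+c)^2}\le 0$: the fraction \emph{decreases} as the added constant $c$ grows. To pass from $c=-\log c_S$ to the fixed constant $1.38$ and obtain a (strictly) larger right-hand side, you therefore need $1.38<-\log c_S$, i.e.\ $c_S< e^{-1.38}$. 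You wrote the opposite condition $c_S\ge e^{-1.38}$, and your subsequent worry (that $e^{-1.38}\approx 0.2516>\tfrac14$ so the asymptotic value of $c_s$ lies on the wrong side) stems entirely from this reversal.

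With the correct direction everything collapses to what Lemma~\ref{lem2} already supplies: an \emph{upper} bound
\[
c_s<\tfrac14\exp\!\Bigl(\tfrac{s\log(2s)+\tfrac14\log s}{s(s-1)}\Bigr),
\]
so that $c_s<e^{-1.38}$ holds as soon as $\frac{s\log(2s)+\tfrac14\log s}{s(s-1)}<\log 4-1.38=0.00629\ldots$. This function of $s$ is eventually decreasing, and a direct evaluation shows it drops below $0.00629$ precisely at $s=1244$; hence $c_S<e^{-1.38}$ for all $S\ge 1244$, and \eqref{cor_1-const} follows with strict inequality. No Euler--Maclaurin lower bound on $c_s$ is needed --- that part of your plan is solving a problem that does not arise once the sign is fixed.
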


\begin{remark}
It is possible to show that an inequality like \eqref{cor_1-const} cannot hold if the constant is too large. In fact, using the polynomial $P(x)=x^2+x$, one can show that the optimal constant has to be less than
$$
2-\log(2)+2\gamma+4\sum_{p \ge 3}\frac{\log(p)}{p^2-1} \le 3.817.
$$
\end{remark}

Let $v_1,\dots,v_{m+1}$ be points in $\mathbb{R}^m$. We define the quantity
$$
D(v_1,\dots,v_{m+1}) := \begin{Vmatrix} 1 & 1 & \cdots & 1 \\ v_1 & v_2 & \cdots & v_{m+1} \end{Vmatrix}.
$$
The points $v_1,\dots,v_{m+1}$ are in the same (affine) hyperplane if and only if
$$
D(v_1,\dots,v_{m+1})=0.
$$

Let $\Gamma \subseteq \mathbb{Z}^m$ be a lattice in $\mathbb{R}^m$. We denote by $|\Gamma|$ the $m$-dimensional volume of the fundamental parallelepiped of the lattice $\Gamma$. For a fixed set $\Omega \in \R^m$, we write
$$
t(\Omega) := \sup_{v_1,\dots,v_{m+1} \in \Omega}D(v_1,\dots,v_{m+1}).
$$

\begin{theorem}\label{thm:2}
Let $\S$ a set of integer points included in a set $\Omega \in \R^m$ $(m \ge 2)$ of nonzero $m$-dimensional volume. Let also $\L$ be a set of lattices $\Gamma \subset \mathbb{Z}^m$. Suppose that for each lattices $\Gamma \in \L$, the points of $\S$ belong to at most $\nu(\Gamma)$ equivalence classes of $\Z^m/\Gamma$ and that
\begin{equation}\label{thm3cond}
\min_{\substack{v_1,\dots,v_{m+1} \in \S \\ v_i \neq v_j\ for\ i \neq j}} D(v_1,\dots,v_{m+1}) > 0.
\end{equation}
Suppose also that the values of $|\Gamma|$ are pairwise coprime. Then,
\begin{equation}\label{thm_2}
S < \max\left(\gamma_m\max_{\substack{1 \le s \le m \\ 2 \nmid s}} \left(\frac{\sum_{\Gamma \in \L} \frac{\log |\Gamma|}{v(\Gamma)^{m-s}}-\log t(\Omega)}{\sum_{\Gamma \in \L} \frac{\log |\Gamma|}{v(\Gamma)^{m}}-\log t(\Omega)}\right)^{1/s},(m+1)\max_{\Gamma \in \L}v(\Gamma)\right)
\end{equation}
if
$$
\sum_{\Gamma \in \L} \frac{\log |\Gamma|}{v(\Gamma)^{m}}-\log t(\Omega) > 0.
$$
We have set $\gamma_m:=\left\lfloor\frac{m+1}{2}\right\rfloor\frac{m(m+1)}{2}$.
\end{theorem}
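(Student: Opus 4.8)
\emph{Proof strategy.}
The plan is to transplant Gallagher's pair--counting argument from $\Z$ to $\Z^{m}$. In $\Z$ one counts ordered pairs $(n,n')$ of elements of the sieved set and uses the implication $n\equiv n'\ (\mathrm{mod}\ q)\Rightarrow q\mid n-n'$ together with coprimality of the moduli and $|n-n'|\le M$. Here the analogue of a pair is an ordered tuple $(v_{0},\dots,v_{s})$ of points of $\S$; the analogue of the divisibility relation is that if $v_{0},\dots,v_{m}$ share one class of $\Z^{m}/\Gamma$ then $|\Gamma|$ divides $D(v_{0},\dots,v_{m})$, since then the columns $v_{1}-v_{0},\dots,v_{m}-v_{0}$ all lie in $\Gamma$ and the determinant equals $|\Gamma|$ times the integer $[\Gamma:\langle v_{1}-v_{0},\dots,v_{m}-v_{0}\rangle]$; and the analogue of $|n-n'|\le M$ is the bound $D(v_{0},\dots,v_{m})\le t(\Omega)$, valid because $\S\subseteq\Omega$. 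Combined with the assumed pairwise coprimality of the $|\Gamma|$, this gives: if $v_{0},\dots,v_{m}$ are distinct and $\mathcal{G}$ is the set of $\Gamma\in\L$ for which they share a class, then $\prod_{\Gamma\in\mathcal{G}}|\Gamma|$ divides $D(v_{0},\dots,v_{m})$, hence $\sum_{\Gamma\in\mathcal{G}}\log|\Gamma|\le\log t(\Omega)$.

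I would first dispose of the trivial range: if $S\le(m+1)\max_{\Gamma\in\L}\nu(\Gamma)$ then \eqref{thm_2} holds, so assume $S>(m+1)\max_{\Gamma}\nu(\Gamma)$; then $S\ge m+1$, and condition \eqref{thm3cond} in fact forbids, for every $1\le s\le m$, that $s+1$ distinct points of $\S$ lie on a common affine $(s-1)$-flat --- for otherwise one adjoins $m-s$ further distinct points of $\S$ and obtains $m+1$ distinct points on a hyperplane with $D=0$. This strengthened non-degeneracy is what makes the lower-dimensional parts of the argument go through.

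Next I would set up the double counting. Summing $\log|\Gamma|$ over all tuples and all lattices for which the tuple lies in one class, and bounding the number of tuples in one class of $\Z^{m}/\Gamma$ from below by convexity (at most $\nu(\Gamma)$ classes of total size $S$), produces a lower bound of the shape $S^{d}\sum_{\Gamma}\log|\Gamma|/\nu(\Gamma)^{d'}$ for suitable degrees $d,d'$, arranged so that the denominator $\sum_{\Gamma}\log|\Gamma|/\nu(\Gamma)^{m}-\log t(\Omega)$ of \eqref{thm_2} emerges. On the other side, the tuples with all coordinates distinct contribute at most $\log t(\Omega)$ each by the first paragraph, the count being essentially $S$ to the same power (using $S>(m+1)\max\nu$). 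The remaining tuples --- those with a repeated coordinate --- reduce to $j$-tuples of distinct points, $1\le j\le m$, forced into one class of $\Z^{m}/\Gamma$; such points span an affine flat of dimension $s-1<m$, and there $|\Gamma|$ no longer sits visibly inside a determinant. Handling this is the crux: on the spanned subspace $V$ one passes to the rank-$s$ lattice $\Gamma\cap V$, whose $s$-dimensional covolume divides the $s$-volume of the sub-configuration, one relates this covolume to $|\Gamma|$ through the projection of $\Gamma$ onto $V^{\perp}$, and one bounds the $s$-volume by $t(\Omega)$ by completing the $(s+1)$-tuple back to an $(m+1)$-tuple of points of $\S$ (using the strengthened non-degeneracy to keep it full-dimensional). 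Re-running the counting at each such level $s$, a parity consideration in this geometry-of-numbers step confines $s$ to odd values and contributes the numerator $\sum_{\Gamma}\log|\Gamma|/\nu(\Gamma)^{m-s}-\log t(\Omega)$, the exponent $1/s$, and, together with the multinomial factors coming from the split by $j$ and the number $\lfloor\tfrac{m+1}{2}\rfloor$ of admissible levels, the constant $\gamma_{m}=\lfloor\tfrac{m+1}{2}\rfloor\tfrac{m(m+1)}{2}$; since for a given configuration one only controls the single level equal to the true effective dimension of its richest coincident sub-configuration, the final estimate is a maximum over odd $s$. I expect this last step --- the passage from $|\Gamma|$ to $|\Gamma\cap V|$ for configurations that fail to span $\R^{m}$, and the bookkeeping of the resulting exponents and constants --- to be where essentially all the work and all the novelty lie.
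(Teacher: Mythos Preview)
Your opening ingredients are exactly right: if $v_0,\dots,v_m$ lie in one class of $\Z^m/\Gamma$ then $|\Gamma|\mid D(v_0,\dots,v_m)$; $D\le t(\Omega)$ since $\S\subseteq\Omega$; and one may assume $S>(m+1)\max_{\Gamma}\nu(\Gamma)$. But from that point on your plan is aimed at the wrong target. The paper does \emph{not} count ordered tuples with possible repetitions, does not reduce to lower-dimensional flats, and never passes to a sublattice $\Gamma\cap V$. It simply forms the single integer
\[
\Delta:=\prod_{1\le i_1<\cdots<i_{m+1}\le S}D(v_{i_1},\dots,v_{i_{m+1}}),
\]
a product over unordered $(m{+}1)$-subsets of distinct points; hypothesis \eqref{thm3cond} is invoked precisely once, to guarantee $\Delta>0$. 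For each $\Gamma$, partitioning $\S$ into classes of sizes $S_1,\dots,S_{\nu(\Gamma)}$ gives $|\Gamma|^{\sum_r\binom{S_r}{m+1}}\mid\Delta$; a convexity lemma for the falling factorial $P(x)=x(x-1)\cdots(x-m)$ (Lemma~\ref{lem6}, and this is where $S\ge(m+1)\nu(\Gamma)$ is used, since $P$ is not convex on $[0,m]$) gives $\sum_r\binom{S_r}{m+1}\ge\nu(\Gamma)\binom{S/\nu(\Gamma)}{m+1}$. Coprimality and the upper bound $\Delta\le t(\Omega)^{\binom{S}{m+1}}$ then yield
\[
\sum_{\Gamma\in\L}\nu(\Gamma)\,P\!\left(\frac{S}{\nu(\Gamma)}\right)\log|\Gamma|\ \le\ P(S)\,\log t(\Omega).
\]

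The odd-$s$ maximum and the constant $\gamma_m$ come from pure algebra on this last inequality, not from any geometry of degenerate sub-configurations. Expand $P(x)=\sum_{j=0}^{m}(-1)^{m-j}a_j x^{j+1}$ with $a_j>0$ (unsigned Stirling numbers, $a_m=1$); substituting $x=S/\nu(\Gamma)$ on the left and $x=S$ on the right and collecting powers of $S$ gives
\[
a_m b_m S^m - a_{m-1}b_{m-1}S^{m-1}+\cdots+(-1)^m a_0 b_0\ \le\ 0,\qquad b_i:=\sum_{\Gamma}\frac{\log|\Gamma|}{\nu(\Gamma)^{i}}-\log t(\Omega).
\]
The hypothesis $b_m>0$ forces all $b_i>0$. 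Dropping the remaining positive (even-$s$) terms on the left and bounding the at most $\lfloor\tfrac{m+1}{2}\rfloor$ negative (odd-$s$) terms by their maximum gives $b_m S^m\le\lfloor\tfrac{m+1}{2}\rfloor\max_{s\ \mathrm{odd}}a_{m-s}b_{m-s}S^{m-s}$, whence the shape of \eqref{thm_2}. So the step you flag as the crux --- handling tuples with repeated coordinates via $\Gamma\cap V$ --- never arises; it is replaced by a one-line manipulation of alternating Stirling coefficients, and your proposed geometric explanation for the parity restriction and for $\gamma_m$ is not what is actually happening.
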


\begin{remark}
The hypothesis \eqref{thm3cond} is really strong and seems difficult to deal with in practice. For this reason, we have included Lemma \ref{lem7}. We have also included in Lemma \ref{lem5} an estimate for the value of $t(\Omega)$ in the case where $\Omega$ is a $m$-dimensional parallelepiped.
\end{remark}

Finally, our considerations of the initial problem have led us to the following theorem. It is an improvement of Theorem 3 of \cite{svk:ts}.

\begin{theorem}\label{thm:3}
Consider the polynomial $P(x) := a_nx^n + \cdots + a_1x + a_0 \in \Z[x]$ of degree $n$ and $q \ge 2$ be an integer satisfying $\gcd(a_n,\dots,a_0,q)=1$. Let $\I$ be an interval of length at most $q^{1/n}$. The number $W$ of solutions to the congruence
\begin{eqnarray}\label{thm_3-sys}
P(x) \equiv 0 \pmod{q} \qquad (x \in \I)
\end{eqnarray}
satisfies
\begin{eqnarray}\label{thm3-res}
W \le 2(n-1)^2\omega(q).
\end{eqnarray}
\end{theorem}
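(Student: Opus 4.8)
Write $\mathcal S=\{x\in\I:\ q\mid P(x)\}$ and $W=\#\mathcal S$; since $2(n-1)^2\omega(q)\ge 2$ we may assume $W\ge2$ and list $\mathcal S=\{x_1<\cdots<x_W\}$. I would first record a normalisation: $\mathcal S\ne\emptyset$ forces $P$ to have a root modulo every prime $p\mid q$, hence $p$ cannot divide all of $a_1,\dots,a_n$ (otherwise $P\equiv a_0\not\equiv0\pmod p$, contradicting $\gcd(a_n,\dots,a_0,q)=1$). This keeps the auxiliary polynomials below ``primitive relative to $q$''.

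The engine is the divided difference
$$Q(X,Y):=\frac{P(X)-P(Y)}{X-Y}=\sum_{k=1}^{n}a_k\sum_{i+j=k-1}X^{i}Y^{j}\in\Z[X,Y],$$
of degree $n-1$ in each variable, which gives $q\mid P(x)-P(y)=(x-y)\,Q(x,y)$ for any two solutions $x,y$. Fixing the leftmost solution $x_1$ and setting $Q_1(Y):=Q(x_1,Y)$, this polynomial has degree $n-1$, leading coefficient $a_n$, and by the normalisation no prime of $q$ divides all its coefficients; writing $Q_1=c_1\widetilde Q_1$ with $\widetilde Q_1$ primitive, one has $\deg\widetilde Q_1=n-1$, $\gcd(c_1,q)=1$, and for every $j\ge2$
$$\widetilde Q_1(x_j)\equiv0\pmod{q_j},\qquad q_j:=\frac{q}{\gcd(q,\,x_j-x_1)}.$$
Because $|x_j-x_1|\le|\I|\le q^{1/n}$ we get $q_j\mid q$, $\omega(q_j)\le\omega(q)$, $q_j\ge q^{(n-1)/n}$, hence $|\I|\le q^{1/n}\le q_j^{1/(n-1)}$. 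Thus every $x_j$ with $j\ge2$ is a zero of the \emph{fixed} primitive polynomial $\widetilde Q_1$, of degree $n-1$, in an interval short relative to its modulus $q_j$ — exactly the hypothesis of the theorem with $n$ replaced by $n-1$.

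This invites an induction on $n$, the case $n=2$ being done by hand using the linear divided difference. In the inductive step I would partition $\{j\ge2\}$ according to the value $d=\gcd(q,x_j-x_1)$ $(d\mid q,\ d\le q^{1/n})$: for each fixed $d$ the corresponding $x_j$ are zeros of $\widetilde Q_1$ in $\I$ modulo the \emph{single} modulus $q/d$, and the inductive hypothesis bounds their number by $2(n-2)^2\omega(q/d)\le2(n-2)^2\omega(q)$. The hard part will be to control the set of values of $d$ that actually occur: the trivial bound (the number of divisors of $q$ that are $\le q^{1/n}$) is $q^{o(1)}$ and useless, so the full force of $|\I|\le q^{1/n}$ must be used here, together with a second application of the larger sieve (Theorem~\ref{thm:1}). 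Concretely, the at most $n-1$ primes $p$ with $p^{\mathrm{v}_p(q)}>q^{1/n}$ (they number $<n$, since their product divides $q$) can contribute only a bounded amount to $\gcd(q,x_j-x_1)$; for the remaining primes one applies Theorem~\ref{thm:1} with the moduli $p^{\mathrm{v}_p(q)}$, estimating $\nu(p^{\mathrm{v}_p(q)})$ from the Hensel/Newton-polygon picture of the zeros of $P$ modulo $p^{\mathrm{v}_p(q)}$ — they cluster into at most $n$ residue classes modulo $p^{\lceil\mathrm{v}_p(q)/n\rceil}$ — so that these primes too contribute only boundedly many values of $d$.

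The main obstacle, then, is this bookkeeping: weaving together the exceptional-prime analysis, the larger-sieve bound for the remaining primes, the lost point $x_1$, and the endpoint losses intrinsic to Theorem~\ref{thm:1}, in such a way that the recursion loses only a polynomial factor in $n$ per step and collapses to the stated $W\le2(n-1)^2\omega(q)$, rather than producing a geometric (or worse) blow-up in $n$. Verifying the clustering lemma for $P$ modulo prime powers and pinning down the exact constant $2(n-1)^2$ are the places where the argument will be most delicate.
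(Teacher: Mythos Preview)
Your divided-difference descent on $n$ is a genuinely different route from the paper's, and the gap is exactly where you yourself place it: the recursion does not close. After partitioning $\{x_2,\dots,x_W\}$ by $d=\gcd(q,x_j-x_1)$ and applying the inductive bound to each block, you get
\[
W\;\le\;1+2(n-2)^2\omega(q)\cdot\#\{d\text{'s that occur}\},
\]
so to reach $2(n-1)^2\omega(q)$ you would need $\#\{d\}$ to be essentially~$1$, uniformly in $q$. Your sketch does not provide this. Theorem~\ref{thm:1} bounds the number of \emph{solutions}, not the number of distinct gcd's they generate with $q$; and the Hensel clustering (``at most $n$ classes modulo $p^{\lceil \mathrm v_p(q)/n\rceil}$'') gives no information for the many primes $p\mid q$ with $\mathrm v_p(q)\le n$. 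For squarefree $q$ with many prime factors below $q^{1/n}$, the set of occurring $d$'s can genuinely have size growing with $\omega(q)$, and the recursion then blows up geometrically in $n$ and in $\omega(q)$ rather than collapsing. The ``at most $n-1$ exceptional primes'' observation is correct but does not help with the small prime powers, which is where the divisor count explodes.

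The paper sidesteps this entirely by \emph{not} inducting on the degree. It first reduces (via Lemma~2.5 of \cite{pl}) to the split case $P(x)=\prod_{j=1}^n(x-a_j)$ with $a_1=0$, then iterates on $q$. One fixes a prime power $q_1=p^{\alpha}\| q$ with $q_1\ge q^{1/\omega(q)}$ and looks at the solutions modulo $p$. If they fall into $t\ge 2$ classes, the most popular class contains $s$ solutions that are, modulo $q_1$, roots of the sub-product $P_\ell(x)=\prod_{a_j\equiv\ell\ (p)}(x-a_j)$ of degree $\le n+1-t$; applying Lemma~\ref{lem4} to $P_\ell$ for the modulus $q_1$ and to $P$ for $q/q_1$ gives a large power of $q$ dividing the Vandermonde product $\Delta=\prod_{i<j}(x_j-x_i)$, and comparing with $\Delta\le q^{\binom{s}{2}/n}$ forces $W\le ts\le \tfrac{t}{t-1}(n-1)(n+1-t)\,\omega(q)\le 2(n-1)^2\omega(q)$ in one stroke. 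If instead all solutions lie in a single class mod $p$ (necessarily $0$), then $p\mid a_j$ for all $j$, the substitution $x=pz$ replaces $q$ by $q/p^{\min(\alpha,n)}$ and shortens $\I$ accordingly, and one repeats. So the engine is Lemma~\ref{lem4} on the full Vandermonde product, together with the factored form of $P$ to manufacture a lower-degree polynomial \emph{at a single prime}, rather than a divided difference globally; and the descent is in $q$, terminating at $q'=1$, not in $n$.
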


\begin{corollary}
Consider the polynomial $P(x) := a_nx^n + \cdots + a_1x + a_0 \in \Z[x]$ of degree $n$ and $q \ge 2$ be an integer satisfying $\gcd(a_n,\dots,a_0,q)=1$. Let $\I$ be an interval of length $L$. The number $W$ of solutions to the congruence
\begin{eqnarray}
P(x) \equiv 0 \pmod{q} \qquad (x \in \I)
\end{eqnarray}
satisfies
\begin{eqnarray}
W \le 2(n-1)^2\omega(q)\left(\frac{L}{q^{1/n}}+1\right).
\end{eqnarray}
\end{corollary}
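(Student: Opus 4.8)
The plan is to deduce the Corollary from Theorem~\ref{thm:3} by a simple covering argument. Write $L$ for the length of $\I$ and put $k := \lceil L/q^{1/n}\rceil$. First I would partition $\I$ into $k$ consecutive subintervals $\I_1,\dots,\I_k$, each of length at most $q^{1/n}$; concretely one can take half-open intervals of common length $L/k \le q^{1/n}$, so that their disjoint union is exactly $\I$. Then every integer $x \in \I$ with $P(x)\equiv 0 \pmod q$ belongs to precisely one of the $\I_j$, so there is no double counting.

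Next I would apply Theorem~\ref{thm:3} to each piece. Since the hypothesis $\gcd(a_n,\dots,a_0,q)=1$ is unchanged and $|\I_j| \le q^{1/n}$, the number $W_j$ of solutions of $P(x)\equiv 0 \pmod q$ with $x \in \I_j$ satisfies $W_j \le 2(n-1)^2\omega(q)$. Summing over $j$ gives $W = \sum_{j=1}^{k} W_j \le 2(n-1)^2\omega(q)\,k$.

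Finally I would bound the number of pieces by $k = \lceil L/q^{1/n}\rceil < L/q^{1/n} + 1$, which yields $W \le 2(n-1)^2\omega(q)\bigl(L/q^{1/n}+1\bigr)$, as claimed. There is essentially no obstacle here; the only point requiring a little care is to choose the subintervals pairwise disjoint (or at least so that each solution is counted in at most one of them) while keeping each of length at most $q^{1/n}$, so that Theorem~\ref{thm:3} applies verbatim to every piece and no solution is counted twice.
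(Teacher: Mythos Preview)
Your argument is correct and is exactly the intended deduction: the paper states the Corollary immediately after Theorem~\ref{thm:3} without proof, and the straightforward covering argument you give---partition $\I$ into $\lceil L/q^{1/n}\rceil$ subintervals of length at most $q^{1/n}$, apply Theorem~\ref{thm:3} to each, and sum---is precisely what is implicit. There is nothing to add.
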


We also have a modest improvement of Theorem \ref{thm:3} in a very particular case.

\begin{theorem}\label{thm:4}
Consider the polynomial
\begin{eqnarray}\label{form}
P(x):=x^n + d
\end{eqnarray}
of degree $n \ge 2$ with $d \in \Z$. Let $q \ge 2$ be an integer and $\I$ be an interval of length at most $q^{1/n}$. The number $W$ of solutions to the congruence
\begin{eqnarray}\label{thmcong}
P(x) \equiv 0 \pmod{q} \qquad (x \in \I)
\end{eqnarray}
satisfies
\begin{eqnarray}\label{thm_4}
W \le n\omega(q).
\end{eqnarray}
\end{theorem}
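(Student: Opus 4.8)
The plan is to reduce the problem modulo prime powers $p^{\alpha_p} \| q$ and count solutions of $x^n \equiv -d \pmod{p^{\alpha_p}}$ inside the short interval $\I$, then multiply across the (by hypothesis at most $\omega(q)$) primes via the Chinese Remainder Theorem and the structure of Theorem \ref{thm:1}. First I would observe that if $x_1, x_2 \in \I$ are two solutions, then $p^{\alpha_p} \mid x_1^n - x_2^n = (x_1 - x_2)(x_1^{n-1} + \cdots + x_2^{n-1})$ for every prime power $p^{\alpha_p} \| q$, and since $|x_1 - x_2| < q^{1/n}$, the factor $x_1 - x_2$ cannot by itself absorb a full power $p^{\alpha_p}$ unless $x_1 = x_2$; this is the short-interval input that makes the binomial $x^n + d$ behave much more rigidly than a general degree-$n$ polynomial.

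The key step is a local estimate: I would show that for a single prime $p$ and exponent $\alpha \ge 1$, the solutions of $x^n + d \equiv 0 \pmod{p^\alpha}$ lying in an interval shorter than $p^{\alpha/n}$ occupy at most $n$ residue classes modulo $p^{\lceil \alpha/n \rceil}$ (or a comparably small modulus), because two such solutions differing by less than $p^{\alpha/n}$ force a strong divisibility condition on their difference. The cleanest route is to use the factorization of $x^n - y^n$ over $\Z$ together with valuations: writing $v_p(x_1 - x_2) = \beta$, one gets $v_p(x_1^n - x_2^n) \le \beta + v_p(n) + (\text{bounded error from the cofactor})$, so $\alpha \le \beta + O_p(1)$, which combined with $\beta < \alpha/n$ (from $|x_1-x_2| < q^{1/n} \le p^{\alpha/n}$... wait, $q^{1/n} \ge p^{\alpha/n}$ only if $q = p^\alpha$) — more carefully, $|x_1 - x_2| < q^{1/n}$ gives $v_p(x_1-x_2) < \frac{\alpha_p}{n} + \frac{\log(q/p^{\alpha_p})}{n\log p}$, so I would instead bound the number of solutions per prime by combining the count of $n$-th roots of $-d$ modulo $p^{\alpha_p}$ (classically $\le n \cdot p^{\,\text{const}\cdot v_p(n)}$ roots, but these roots spread over the full interval of length $p^{\alpha_p}$, hence at most $\lceil q^{1/n}/p^{\alpha_p/n}\rceil + (\text{few}) $ of them fall in $\I$). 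The arithmetic will be delicate near primes dividing $n$.

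Having established that each prime power in $q$ contributes at most $n$ "obstructions" in a suitable sense, I would apply Theorem \ref{thm:1} with $\mathcal{Q} = \{p^{\alpha_p} : p \mid q\}$ and $\nu(p^{\alpha_p}) = n$ for each, together with the fact that $\S \subseteq \I$ has $\log(c_S M) \le \log(c_S q^{1/n}) = \frac{1}{n}\log q + \log c_S$; with all $\nu(q) = n$ equal, inequality \eqref{thm_1} collapses to $S \le n \cdot \frac{\sum_p \log p^{\alpha_p} - \log(c_S q^{1/n})}{\sum_p \log p^{\alpha_p} - n\log(c_S q^{1/n})}$, and since $\sum_p \log p^{\alpha_p} = \log q$ while $c_S \approx 1/4$, the numerator and denominator are both close to $\log q$ (up to bounded additive terms), forcing $S \le n\omega(q)$ after checking the small cases and the convexity of the bound in $\omega(q)$. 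The main obstacle I anticipate is the local count near primes $p \mid n$, where Hensel's lemma degenerates and the number of $n$-th roots of $-d$ modulo $p^{\alpha_p}$ can jump by a factor depending on $v_p(n)$; controlling this without losing the clean constant $n$ — rather than $n \cdot (\text{something})$ — will require exploiting that these extra roots are clustered and hence mostly fall outside the short interval $\I$, or absorbing the discrepancy into the $\omega(q)$ factor by a separate argument for the finitely many primes dividing $n$.
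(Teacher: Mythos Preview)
Your proposal has a genuine gap: applying Theorem~\ref{thm:1} with $\nu(p^{\alpha_p})=n$ for every $p\mid q$ cannot produce a bound of the shape $n\omega(q)$. With $M=q^{1/n}$ and all $\nu$ equal to $n$, inequality~\eqref{thm_1} becomes
\[
S \le \frac{\log q-\log(c_S q^{1/n})}{\tfrac{1}{n}\log q-\log(c_S q^{1/n})}
   = 1+\frac{(1-1/n)\log q}{-\log c_S},
\]
a bound of order $\log q/\log 4$; this is exactly the content of the Remark following the proof of Theorem~\ref{thm:3}. The number $\omega(q)$ never enters: for $q=p^{k}$ with $k$ large the theorem demands $W\le n$, while the sieve only gives $W\ll k$. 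Treating all prime powers symmetrically is therefore the wrong move, and no amount of ``convexity in $\omega(q)$'' recovers the loss. Your earlier local valuation sketch (via $v_p(x_1^n-x_2^n)$) is also not developed into a usable statement; you yourself notice mid-argument that the inequality $|x_1-x_2|<q^{1/n}$ does not translate into $v_p(x_1-x_2)<\alpha_p/n$ unless $q=p^{\alpha_p}$.

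The paper's proof is deliberately \emph{asymmetric} in the primes. After reducing to $\gcd(d,q)=1$, it singles out one large prime power $q_1=p^{\alpha}\|q$ with $q_1>q^{1/(\omega(q)+1)}$, uses Lemma~\ref{lem3} to place all solutions in at most $n$ classes modulo $q_1$, and restricts to the most popular class, of size $s$. Within that class the Vandermonde product $\Delta$ of pairwise differences is divisible by $q_1^{\binom{s}{2}}$ (trivially) and by $(q/q_1)^{s^2/(2n)-s/2}$ (Lemma~\ref{lem4}), while $\Delta\le q^{\binom{s}{2}/n}$ from the interval length. Comparing exponents forces $q_1\le (q/q_1)^{1/(s-1)}$, and the size hypothesis on $q_1$ then gives $s\le\omega(q)$, hence $W\le ns\le n\omega(q)$. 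The prime $2$ and the case $\omega(q)=1$ require separate short arguments. The idea you are missing is precisely this single-prime extraction, which is what converts the divisibility comparison into a bound in $\omega(q)$ rather than $\log q$.
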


\section{Preliminary lemmas}

\begin{lemma}\label{lem1}
For each $s \ge 2$, we have
$$
\max_{0 \le \xi_1 \le \dots \le \xi_s \le 1}\prod_{1 \le i < j \le s}(\xi_j-\xi_i)=c_s^{\binom{s}{2}}.
$$
\end{lemma}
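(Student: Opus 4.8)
The plan is to set up the optimization over the ordered simplex $0\le\xi_1\le\cdots\le\xi_s\le1$ and show that the product $\prod_{i<j}(\xi_j-\xi_i)$ is maximized when the $\xi_i$ are the roots of a specific classical orthogonal polynomial, namely a Jacobi polynomial (or a derivative thereof) on $[0,1]$. The quantity $\prod_{i<j}(\xi_j-\xi_i)^2$ is, up to sign, the discriminant of the monic polynomial $P(x)=\prod_{k=1}^s(x-\xi_k)$; maximizing a discriminant of a polynomial with all roots in an interval is a classical extremal problem whose answer is given by a Jacobi-type polynomial. I would first argue that the maximum is attained at an interior critical point of the open simplex except that the two extreme roots are forced to the endpoints: indeed, if $\xi_1>0$ or $\xi_s<1$ one can translate/dilate to strictly increase all gaps, so at the optimum $\xi_1=0$ and $\xi_s=1$, and the remaining $s-2$ nodes lie strictly inside $(0,1)$ and are pairwise distinct (a repeated node makes the product vanish).

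Next I would take logarithmic derivatives: writing $F=\sum_{i<j}\log(\xi_j-\xi_i)$, the stationarity condition $\partial F/\partial\xi_k=0$ for each interior $k$ reads $\sum_{j\ne k}\frac{1}{\xi_k-\xi_j}=0$. Combining this with the boundary nodes at $0$ and $1$, this is exactly the electrostatic equilibrium condition characterizing the zeros of the Jacobi polynomial $P_{s-2}^{(1,1)}$ (equivalently, the zeros of the derivative of a Legendre-type polynomial), rescaled to $[0,1]$. So the optimal configuration is $\{0,1\}$ together with the roots of that Jacobi polynomial; equivalently $P(x)=x(x-1)\,\frac{d}{dx}R(x)$ for a suitable Gegenbauer/Legendre polynomial $R$ on $[0,1]$. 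Then the value of the maximum is the square root of the (absolute) discriminant of this polynomial, and the discriminant of Jacobi polynomials has a known closed product form. I would plug that into the formula and simplify: the resulting product over $j=1,\dots,s$ of the factors $\big((j-1)^{2(j-1)}j^j/(s+j-2)^{s+j-2}\big)$, raised to the normalizing power, should match $c_s^{\binom{s}{2}}$ after collecting the contributions of the Gamma-function (factorial) terms in the discriminant formula. The bookkeeping of exponents — matching $\Gamma$-factors from the Jacobi discriminant against the stated product — is the routine part but needs care with the $\binom{s}{2}$ normalization.

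The main obstacle is establishing rigorously that the interior critical point is the \emph{global} maximum and not merely a stationary point: the function $F$ on the simplex is not concave, so I would instead argue by compactness (a continuous function on the closed simplex attains its max), rule out the boundary faces where two coordinates coincide (value $0$ there), deduce the max is at an interior-type configuration with $\xi_1=0,\xi_s=1$ by the scaling argument above, and then invoke uniqueness of the electrostatic equilibrium (the energy $-F$ is strictly convex in a suitable logarithmic sense once the endpoints are pinned, by a standard potential-theory argument) to conclude there is exactly one critical configuration, which must therefore be the maximizer. An alternative, perhaps cleaner, route is to recognize $\max\prod_{i<j}(\xi_j-\xi_i)$ directly as a known extremal quantity — it is (a power of) the transfinite-diameter/Chebyshev-type constant for $s$ points on $[0,1]$ — and quote the explicit evaluation in terms of Jacobi polynomial discriminants from the literature (e.g. Schur's classical computation), reducing the proof to the exponent bookkeeping. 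Either way, once the optimal nodes are identified, deriving the displayed closed form for $c_s$ is a direct computation.
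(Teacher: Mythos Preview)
Your proposal is correct in outline, but it takes a much longer route than the paper. The paper's proof is a single sentence: the identity is precisely Theorem~8.5.2 of Andrews--Askey--Roy, \emph{Special Functions}, specialized to parameters $p=q=0$. What you have sketched --- the scaling argument forcing $\xi_1=0$, $\xi_s=1$, the Stieltjes electrostatic equilibrium for the interior nodes, the identification with zeros of a Jacobi polynomial, and the closed-form discriminant computation --- is essentially the content of the proof of that cited theorem. So you are not wrong; you are reproving from scratch what the paper simply quotes. Your version has the advantage of being self-contained and of explaining \emph{why} the constant $c_s$ takes this particular product form; the paper's version has the advantage of being one line. One small caution on your sketch: the uniqueness of the interior critical point does need a real argument (it follows, for instance, from the strict concavity of $\sum_{i<j}\log(\xi_j-\xi_i)$ on the open ordered simplex once the endpoints are fixed, which in turn follows because the Hessian is a sum of rank-one negative-semidefinite pieces with no common kernel direction other than zero), so do not leave that as a hand-wave if you carry out the full proof.
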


\begin{proof}
This is a restatement of Theorem $8.5.2$ of \cite{gea:ra:rr} with $p=q=0$.
\end{proof}

\begin{lemma}\label{lem2}
For each $s \ge 2$, the inequality
$$
c_s < \frac{1}{4}\exp\left(\frac{s\log(2s)+\frac{1}{4}\log(s)}{s(s-1)}\right)
$$
holds.
\end{lemma}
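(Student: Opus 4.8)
The plan is to work directly with the explicit product defining $c_s$ and estimate it from above. First I would take logarithms. From the definition,
$$
s(s-1)\log c_s = \sum_{j=1}^{s}\Bigl(2(j-1)\log(j-1) + j\log j - (s+j-2)\log(s+j-2)\Bigr),
$$
with the usual convention that the $j=1$ term contributes only $\log 1 = 0$ from the first piece. The aim is to show this is at most $s(s-1)\log\tfrac14 + s\log(2s) + \tfrac14\log s$, i.e. to bound the whole sum by $-2s(s-1)\log 2 + s\log(2s) + \tfrac14\log s$.

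The natural tool is to compare each of the three sums $\sum (j-1)\log(j-1)$, $\sum j\log j$, $\sum (s+j-2)\log(s+j-2)$ to the corresponding integral $\int t\log t\,dt = \tfrac12 t^2\log t - \tfrac14 t^2$, using that $t\log t$ is convex and increasing on $[1,\infty)$. The convexity gives one-sided bounds: $\sum_{k=a}^{b} k\log k \le \int_a^{b+1} t\log t\,dt$ for the sums we want to bound above, and $\sum_{k=a}^{b} k\log k \ge \int_{a-1}^{b} t\log t\,dt$ (or $\ge \int_a^b$) for the one we subtract and hence want to bound below. Concretely: $\sum_{j=1}^s j\log j$ runs over $k=1,\dots,s$; $\sum_{j=1}^s 2(j-1)\log(j-1)$ runs over $k=1,\dots,s-1$ with weight $2$; and $\sum_{j=1}^s (s+j-2)\log(s+j-2)$ runs over $k = s-1,\dots,2s-2$. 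Replacing each by its integral, the leading terms $\tfrac12 t^2\log t$ and $-\tfrac14 t^2$ evaluated at the endpoints $1$, $s-1$, $s$, $2s-2$ should combine to give exactly $-2s(s-1)\log 2$ as the main term: this is the arithmetic heart of the computation and where I expect the bookkeeping to be heaviest. The value $2s-2$ feeding into $\log(2s-2) = \log 2 + \log(s-1)$ is what produces the $\log 2$, and the quadratic coefficients in front must cancel the $\log(s-1)$ and $\log s$ contributions from the other two sums down to an error of size $O(s)$ in the exponent of $\log$, which after dividing by $s(s-1)$ gives the $\frac{s\log(2s)}{s(s-1)}$ term, while the Euler–Maclaurin correction terms (of size $\log$ of the endpoints) account for the $\frac{\frac14\log s}{s(s-1)}$.

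The main obstacle is purely computational: carrying the integral estimates with enough care that the error terms are genuinely bounded by $s\log(2s) + \tfrac14\log s$ and not merely by $O(s\log s)$ — in particular one must track the endpoint contributions from Euler–Maclaurin (or from the convexity inequalities) sharply, since the claimed bound has an explicit, fairly tight constant $\tfrac14$ on the $\log s$ term and a clean $s\log(2s)$ rather than, say, $2s\log s$. I would handle this by writing $\log(s-1) = \log s + \log(1-\tfrac1s)$ and $\log(2s-2)=\log 2 + \log s + \log(1-\tfrac1s)$, expanding, and collecting: the $\log(1-\tfrac1s)$ pieces are negative and can simply be discarded (they only help), and one is left to verify a concrete inequality among elementary functions of $s$, which can be checked by a short monotonicity argument for $s$ large and a finite check for small $s$. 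An alternative, perhaps cleaner, route is to recognize $\prod_{j=1}^s (j-1)^{2(j-1)} j^j / (s+j-2)^{s+j-2}$ in terms of the Barnes $G$-function (hyperfactorials), since $\prod_{j=1}^{s} j^j = G(s+2)/\,$something and the asymptotics of $\log G$ are classical; plugging in the known expansion $\log G(z+1) = \tfrac12 z^2\log z - \tfrac34 z^2 + \cdots$ would give the leading $-2s(s-1)\log 2$ and the $O(s)$ and $O(\log s)$ corrections immediately, turning the problem into verifying the numerical constants. Either way the structure is the same: identify the $\tfrac14$ as $e^{-2\log 2}$ coming from the $2s-2$ endpoint, and control everything else as lower-order.
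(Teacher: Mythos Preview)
Your approach is genuinely different from the paper's. The paper does not estimate the sum $\sum_{j=1}^{s}\bigl(2(j-1)\log(j-1)+j\log j-(s+j-2)\log(s+j-2)\bigr)$ directly at all. Instead it sets $a_s:=s(s-1)\log c_s$ and proves by induction that $a_s \le f(s)$, where $f(s)$ is the logarithm of the claimed right-hand side minus an auxiliary $1/s$ (the extra $-1/s$ is what makes the induction close). The key observation is that the difference $a_{s+1}-a_s$ collapses to a closed form involving only four logarithmic terms, because passing from $s$ to $s+1$ telescopes almost everything in the defining product; the induction step then reduces to a concrete elementary inequality in $s$, verified for $s\ge 200$ after a finite check for $2\le s\le 199$.

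Your direct-estimation route via Euler--Maclaurin or Barnes $G$ asymptotics is plausible in outline, and you correctly locate where the $\tfrac14=e^{-2\log 2}$ arises. But two points deserve caution. First, asymptotic expansions of $\log G$ come with two-sided error terms, so to obtain a strict upper bound for \emph{every} $s\ge 2$ (not just eventually) you will still need a finite verification for small $s$, exactly as the paper does. Second, the bare monotonicity bounds you propose, $\sum_{k=a}^{b}k\log k \le \int_a^{b+1}t\log t\,dt$ and $\ge \int_{a-1}^{b}$, are one order too crude: each misses the true sum by roughly $\tfrac12 b\log b$, so across the three sums you accumulate $O(s\log s)$ discrepancies that swamp the $\tfrac14\log s$ term you are trying to isolate. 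You would need at least the trapezoidal Euler--Maclaurin correction $\tfrac12(f(a)+f(b))$ to bring the error down to $O(\log s)$, and then the constant in front of $\log s$ must be tracked through all three sums and shown to be at most $\tfrac14$. The paper's inductive argument avoids this entirely by never bounding the full sum, only its increment, which is why it is computationally much lighter.
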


\begin{proof}
We proceed by induction. We start by checking that the result is true for $2 \le s \le 199$. Now, we write
$$
a_s:=s(s-1)\log c_s
$$
and
$$
f(s):=-s(s-1)\log 4 +s\log 2s+\frac{\log s}{4}-\frac{1}{s}.
$$
We verify that $a_{200} \le f(200)$. For $s \ge 200$, we suppose that $a_{s} \le f(s)$ and we want to establish that $a_{s+1} \le f(s+1)$. It is enough to establish that
\begin{equation}\label{cs0}
a_{s+1}-a_{s} \le f(s+1)-f(s).
\end{equation}
We have
\begin{equation}\label{cs1}
{\scriptstyle a_{s+1}-a_{s}=(s+1)\log (s+1)+(s-1)\log(s-1)-(2s-1)\log(2s-1)-2s\log2}
\end{equation}
and
\begin{equation}\label{cs2}
{\scriptstyle f(s+1)-f(s) = -2s\log 4+\log 2+(s+1)\log(s+1)-s\log s +\frac{1}{4}\log\left(1+\frac{1}{s}\right)+\frac{1}{s(s+1)}.}
\end{equation}
Comparing \eqref{cs1} with \eqref{cs2}, we observe that \eqref{cs0} holds if and only if
\begin{equation}\label{cs4}
g(2s-1)-g(s-1) \le \frac{1}{4}\log\left(1+\frac{1}{s}\right)+\frac{1}{s(s+1)}
\end{equation}
holds, where we have written $g(x):=x\log\left(1+\frac{1}{x}\right)$. Now, we make use of the inequality
$$
\frac{1}{x}-\frac{1}{2x^2} \le \log\left(1+\frac{1}{x}\right) \le \frac{1}{x}-\frac{1}{2x^2}+\frac{1}{3x^3}\quad(x>1),
$$
to establish that the inequality \eqref{cs4} holds if
$$
0 \le \frac{1}{4s}+\frac{1}{2(2s-1)}-\frac{1}{2(s-1)}+\frac{1}{s(s+1)}-\frac{1}{8s^2}-\frac{1}{3(2s-1)^2}
$$
holds. We verify that this is the case for $s \ge 200$, which completes the induction step.
\end{proof}

For a fixed $n \ge 2$, we consider the multiplicative function $g(n,q)$, i.e.
$$
g(n,q)=\prod_{p^j\|q}g(n,p^j),
$$
defined by
$$
g(n,p^j):=\left\{\begin{array}{ll} \gcd(n,\phi(p^j))&\mbox{if}\ p\ge3,\\ 1&\mbox{if}\ p^j=2,\\ \gcd(n,2)&\mbox{if}\ p^j=4,\\ \gcd(n,2)\cdot \gcd(n,\phi(2^{j-1}))&\mbox{if}\ p=2\ \mbox{and}\ j\ge3. \end{array}\right.
$$
In particular, $g(n,q)\le2n^{\omega(q)}$.

\begin{lemma}\label{lem3}
Let $P(x)$ be the polynomial \eqref{form} with $n \ge 2$. Let also $q \ge 2$ be an integer satisfying $\gcd(d,q)=1$. Then the total number of solutions $\pmod{q}$ to the congruence
$$
P(x) \equiv 0 \pmod{q}
$$
is of at most $g(n,q)$.
\end{lemma}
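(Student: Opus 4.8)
The plan is to reduce to prime powers via the Chinese Remainder Theorem, since both $N(P,q)$ and $g(n,q)$ are multiplicative in $q$; thus it suffices to bound the number of solutions of $x^n \equiv -d \pmod{p^j}$ for each prime power $p^j \| q$, and to show this count is at most $g(n,p^j)$. Throughout, $\gcd(d,q)=1$ means $x$ must be a unit modulo $p^j$, so we are counting $n$-th roots of the unit $-d$ in the group $(\Z/p^j\Z)^\times$.

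For an odd prime $p$, the group $(\Z/p^j\Z)^\times$ is cyclic of order $\phi(p^j)$, and in a cyclic group of order $m$ the equation $y^n = a$ has either $0$ or $\gcd(n,m)$ solutions; hence the count is at most $\gcd(n,\phi(p^j)) = g(n,p^j)$. For $p=2$ I would split into cases according to the structure of $(\Z/2^j\Z)^\times$: it is trivial for $j=1$ (so at most $1 = g(n,2)$ solution), cyclic of order $2$ for $j=2$ (so at most $\gcd(n,2) = g(n,4)$ solutions), and for $j \ge 3$ it is $\Z/2\Z \times \Z/2^{j-2}\Z$. In that last case, writing the number of $n$-th roots of an element of a finite abelian group $A$ as either $0$ or $\#A[n]$ where $A[n]$ is the $n$-torsion, the $n$-torsion of $\Z/2\Z \times \Z/2^{j-2}\Z$ has size $\gcd(n,2)\cdot\gcd(n,2^{j-2}) = \gcd(n,2)\cdot\gcd(n,\phi(2^{j-1}))$, matching the definition of $g(n,2^j)$.

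Having bounded each local factor, I would multiply over $p^j \| q$ and invoke multiplicativity of $g(n,\cdot)$ to conclude $N(P,q) \le \prod_{p^j \| q} g(n,p^j) = g(n,q)$. The only mild subtlety — and the step I would be most careful about — is the $p=2$, $j\ge 3$ analysis: one must use the correct decomposition of $(\Z/2^j\Z)^\times$ and the elementary fact that for a finite abelian group the solution set of $y^n=a$, when nonempty, is a coset of the $n$-torsion subgroup and hence has cardinality $\#A[n]$, which for a product of cyclic groups factors as the product of the $\gcd$'s of $n$ with the orders of the factors. Everything else is a routine application of the cyclic-group root-counting fact together with CRT-multiplicativity.
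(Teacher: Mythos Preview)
Your proposal is correct and follows essentially the same approach as the paper's proof: reduce to prime powers by the Chinese remainder theorem, use that $(\Z/p^j\Z)^\times$ is cyclic of order $\phi(p^j)$ for odd $p$ (the paper phrases this as ``primitive root''), and for $p=2$, $j\ge 3$ use the decomposition $(\Z/2^j\Z)^\times \cong \Z/2\Z \times \Z/2^{j-2}\Z$ (the paper phrases this via the representation $(-1)^a 5^b$). Your coset-of-the-$n$-torsion formulation is exactly the abstract version of the paper's primitive-root computation, and the identification $\phi(2^{j-1})=2^{j-2}$ matches the definition of $g(n,2^j)$ on the nose.
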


\begin{proof}
The proof is an easy exercise that uses a primitive root of $\left(\mathbb{Z}/p^\alpha\mathbb{Z}\right)^*$, for any odd prime $p$ and $\alpha \ge 1$, together with the fact that any element of $\left(\mathbb{Z}/2^\alpha\mathbb{Z}\right)^*$, with $\alpha \ge 2$, has a unique representation as $(-1)^a5^b$ with $a \in \{0,1\}$ and $b \in \{1,\dots,2^{\alpha-2}\}$. The multiplicativity follows from the Chinese remainder theorem.
\end{proof}

\begin{lemma}\label{lem4}
Let $P(x) := a_nx^n + \cdots + a_1x + a_0 \in \Z[x]$ be a polynomial of degree $n \ge 1$ and $q \ge 2$ be an integer satisfying $\gcd(a_n,\dots,a_0,q)=1$. Let also $x_1 < x_2 < \cdots < x_s$ be a sequence of solutions to the congruence
$$
P(x) \equiv 0 \pmod{q}.
$$
Consider the product
$$
\Delta := \prod_{1 \le i < j \le s}(x_j-x_i).
$$
If $s \ge n+1$ then
$$
q^{\frac{s^2}{2n}-\frac{s}{2}} \mid \Delta.
$$
\end{lemma}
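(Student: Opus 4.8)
The plan is to reduce to a statement about one prime at a time and prove it by induction on the exponent of that prime. Fix a prime $p\mid q$ and put $\beta=\mathrm{v}_p(q)$. Since $\gcd(a_n,\dots,a_0,q)=1$ and $p\mid q$, the polynomial $P$ is not $\equiv0\pmod p$, and it suffices to show $\mathrm{v}_p(\Delta)\ge\bigl(\tfrac{s^2}{2n}-\tfrac s2\bigr)\beta$. As $d\mapsto\tfrac{t^2}{2d}-\tfrac t2$ is nonincreasing in $d$, this follows from the more flexible claim: for every $R\in\Z[x]$ with $R\not\equiv0\pmod p$, writing $d=\deg(R\bmod p)$, for every integer $t\ge0$ and all distinct integers $y_1<\dots<y_m$ with $R(y_i)\equiv0\pmod{p^{t}}$,
\[
\mathrm{v}_p\Bigl(\prod_{1\le i<j\le m}(y_j-y_i)\Bigr)\ \ge\ \Bigl(\frac{m^{2}}{2d}-\frac m2\Bigr)^{+}t .
\]
One then applies this with $R=P$, $d=\deg(P\bmod p)$, $t=\beta$; here $1\le d\le n$, the bound $d\ge1$ holding because a nonzero constant residue polynomial has no root mod $p$, which is incompatible with $s\ge n+1$.

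I would prove the claim by induction on $t$; the cases $t=0$ and $m\le d$ are trivial, so assume $t\ge1$ and $m\ge d+1$. Split $y_1,\dots,y_m$ according to the residues $a_1,\dots,a_k$ modulo $p$ at which $R$ vanishes mod $p$, with $m_j$ of the $y_i$ in the $j$-th class; by Lagrange $k\le d$, and if $\mu_j\ge1$ is the multiplicity of $a_j$ as a root of $R\bmod p$ then $\sum_j\mu_j\le d$. Inside the $j$-th class write $y_i=a_j+pz_i$ and $R(a_j+pz)=p^{c_j}R_j(z)$ with $R_j\in\Z[x]$ primitive. Comparing $p$-adic valuations of the coefficients of $(x-a_j)^{\ell}$ in $R$ (divisible by $p$ for $\ell<\mu_j$, coprime to $p$ for $\ell=\mu_j$) yields $1\le c_j\le\mu_j$ and $d_j:=\deg(R_j\bmod p)\le c_j$. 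The $z_i$ are distinct integers with $R_j(z_i)\equiv0\pmod{p^{t-c_j}}$, so, using the inductive hypothesis inside each class (when $d_j=0$ one necessarily has $t\le c_j$ and the class contributes only $\binom{m_j}{2}$) and that each difference in the $j$-th class is $p$ times the corresponding difference of the $z_i$,
\[
\mathrm{v}_p\Bigl(\prod_{i<j}(y_j-y_i)\Bigr)\ \ge\ \sum_{j=1}^{k}\left[\binom{m_j}{2}+\Bigl(\frac{m_j^{2}}{2d_j}-\frac{m_j}{2}\Bigr)^{+}(t-c_j)^{+}\right].
\]

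Replacing $d_j$ and $c_j$ by $\mu_j$ on the right (legitimate since $d_j\le\mu_j$ and $c_j\le\mu_j$), it remains to check the purely numerical inequality
\[
\sum_{j=1}^{k}\left[\binom{m_j}{2}+\Bigl(\frac{m_j^{2}}{2\mu_j}-\frac{m_j}{2}\Bigr)^{+}(t-\mu_j)^{+}\right]\ \ge\ \Bigl(\frac{m^{2}}{2d}-\frac m2\Bigr)^{+}t
\]
whenever $m_j,\mu_j\ge1$, $\sum_j m_j=m$ and $\sum_j\mu_j\le d$. For large $t$ each bracket is essentially $\tfrac{m_j^{2}}{2\mu_j}t$ and the inequality reduces to $\sum_j m_j^{2}/\mu_j\ge m^{2}/d$, which is Cauchy--Schwarz together with $\sum_j\mu_j\le d$; the remaining cases follow since the terms $\binom{m_j}{2}$ alone already sum to at least $\tfrac{m^{2}}{2d}-\tfrac m2$ (one full level), after accounting for which of the $(\,\cdot\,)^{+}$ truncations are active. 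The delicate point I expect is the ``collapse'' case $c_j\ge2$: a whole residue class mod $p$ then automatically consists of roots of $R$ mod $p^{c_j}$, so after dividing out $p^{c_j}$ the inductive modulus drops by $c_j$ while only one factor of $p$ is extracted from each difference; the inequalities $d_j\le c_j\le\mu_j$ and $\sum_j\mu_j\le d$ are exactly what compensates for this and, with the Cauchy--Schwarz step, closes the induction.
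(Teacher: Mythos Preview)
The paper does not actually prove this lemma: it simply invokes Lemma~2.5 of \cite{pl} for $n\ge2$ and remarks that $n=1$ is clear. Your proposal, by contrast, supplies a self-contained argument, and the prime-by-prime induction on the exponent is indeed the standard route (essentially the method behind the cited result). The reduction to a single prime, the refined claim with $d=\deg(R\bmod p)$, the substitution $y=a_j+pz$, and the bounds $1\le d_j\le c_j\le\mu_j$ with $\sum_j\mu_j\le d$ are all correct as stated.

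The one place where your write-up is genuinely incomplete is the ``purely numerical inequality'' at the end. Your two remarks (Cauchy--Schwarz for large $t$, and $\sum_j\binom{m_j}{2}\ge\frac{m^2}{2d}-\frac m2$ for one level) do not by themselves cover the intermediate range of $t$, and the phrase ``after accounting for which of the $(\cdot)^+$ truncations are active'' is where a referee would ask for details. Here is a clean way to close it. Write $f_j=\bigl(\tfrac{m_j^2}{2\mu_j}-\tfrac{m_j}{2}\bigr)^+$ and $F=\bigl(\tfrac{m^2}{2d}-\tfrac m2\bigr)^+$. First, for every $j$ and every $t\ge0$,
\[
\binom{m_j}{2}+f_j\,(t-\mu_j)^{+}\ \ge\ f_j\,t,
\]
because $\binom{m_j}{2}-f_j\mu_j=\tfrac{m_j(\mu_j-1)}{2}\ge0$ when $f_j>0$, and the left side is $\binom{m_j}{2}\ge f_j\mu_j\ge f_j t$ when $t\le\mu_j$. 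Second, since $(x)^+\ge x$,
\[
\sum_j f_j\ \ge\ \sum_j\Bigl(\frac{m_j^2}{2\mu_j}-\frac{m_j}{2}\Bigr)\ \ge\ \frac{m^2}{2\sum_j\mu_j}-\frac m2\ \ge\ \frac{m^2}{2d}-\frac m2,
\]
the middle step being Cauchy--Schwarz; hence $\sum_j f_j\ge F$. Summing the first display over $j$ and using the second gives the required inequality for all $t\ge0$ at once, with no case analysis on the truncations. With this insertion your argument is complete.
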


\begin{proof}
This result is proved in Lemma 2.5 of \cite{pl} for $n \ge 2$ and it is clear for $n=1$.
\end{proof}

\begin{lemma}\label{lem5}
Let $\Omega \in \R^m$ be a closed parallelepiped of nonzero $m$-dimensional volume. Then
\begin{eqnarray}\label{lem4_ineq}
t(\Omega) \le \frac{(m+2)^{\frac{m+1}{2}}}{2^m}Vol(\Omega).
\end{eqnarray}
Also, we have that $t(1)=t(2)=Vol(\Omega)$, $t(3)=2Vol(\Omega)$ and that $t(4)=3Vol(\Omega)$.
\end{lemma}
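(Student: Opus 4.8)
The plan is to reduce the inequality to the case of the unit cube by affine invariance, and then to bound the defining determinant directly with Hadamard's inequality after rescaling the vertices to the symmetric cube $[-1,1]^m$; the exact small-dimensional values will follow by identifying the extremal problem with the classical Hadamard maximal determinant.

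\emph{Affine reduction.} Subtracting the first column from the others in the defining matrix shows that $D(v_1,\dots,v_{m+1})=\bigl|\det[\,v_2-v_1\,|\,\cdots\,|\,v_{m+1}-v_1\,]\bigr|$. Hence, for an affine bijection $\Phi(x)=Ax+b$ with $A$ invertible, $D(\Phi v_1,\dots,\Phi v_{m+1})=|\det A|\cdot D(v_1,\dots,v_{m+1})$. Choosing $\Phi$ to carry $[0,1]^m$ onto $\Omega$, so that $|\det A|=Vol(\Omega)$, and taking the supremum over the $v_i$ gives $t(\Omega)=Vol(\Omega)\,t([0,1]^m)$. It therefore suffices to bound $t([0,1]^m)$.

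\emph{The general bound.} For $v_1,\dots,v_{m+1}\in[0,1]^m$ I would put $w_i:=2v_i-\mathbf 1\in[-1,1]^m$. Multiplying the last $m$ rows of the defining matrix by $2$ and then subtracting the (all-ones) first row from each of them yields
\[
2^m\,D(v_1,\dots,v_{m+1})=\left|\det\begin{pmatrix}1&\cdots&1\\ w_1&\cdots&w_{m+1}\end{pmatrix}\right|.
\]
Every entry of this $(m+1)\times(m+1)$ matrix has absolute value at most $1$, so each column has Euclidean length at most $\sqrt{m+1}$; Hadamard's inequality then gives a value at most $(m+1)^{(m+1)/2}$, whence $t([0,1]^m)\le (m+1)^{(m+1)/2}/2^m\le (m+2)^{(m+1)/2}/2^m$, which is \eqref{lem4_ineq}.

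\emph{The exact small values.} Since $D$ is a convex function of each $v_i$ separately (the modulus of an affine form), its supremum over the cube is attained with all $v_i$ at vertices, i.e. $w_i\in\{-1,1\}^m$; after column sign changes this identifies $2^m\,t([0,1]^m)$ with the Hadamard maximal determinant $h(m+1)$. The values $h(2)=2$, $h(3)=4$, $h(4)=16$, $h(5)=48$ then give $t(m)\le 1,1,2,3$ for $m=1,2,3,4$, and equality is witnessed by the explicit configurations $\{0,1\}$, $\{(0,0),(1,0),(0,1)\}$, $\{(0,0,0),(1,1,0),(1,0,1),(0,1,1)\}$, and $\{(0,0,0,0),(1,1,0,0),(1,0,1,0),(1,0,0,1),(0,1,1,1)\}$, which one checks realize $D=1,1,2,3$ respectively. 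The step I expect to need the most care is $m=4$: there is no Hadamard matrix of order $5$, so $t(4)\le 3\,Vol(\Omega)$ cannot come from the plain Hadamard bound and must instead rest on the Barba estimate $h(n)\le\sqrt{2n-1}\,(n-1)^{(n-1)/2}$ for odd $n$, which gives $h(5)\le\sqrt{9}\cdot 4^{2}=48$; everything else is routine linear algebra.
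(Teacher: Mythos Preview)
Your argument is correct, and in fact sharper and cleaner than the paper's. Both proofs begin with the same affine reduction to $[0,1]^m$, but diverge from there. For the general bound the paper augments the matrix $A$ to an $(m+2)\times(m+2)$ matrix $B$ by bordering with a column $(1,\tfrac12,\dots,\tfrac12)^t$ and a row $(1,0,\dots,0)$, then subtracts the first column and applies Hadamard to the rows; this naturally produces the factor $(m+2)^{(m+1)/2}$. Your translation $w_i=2v_i-\mathbf 1$ to the symmetric cube followed by Hadamard on the columns is more direct and actually yields the stronger constant $(m+1)^{(m+1)/2}/2^m$, of which the stated inequality is an immediate weakening. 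For the small-dimensional values, the paper first argues (via the vanishing-cofactor trick at an interior extremum) that the maximum is attained at a $\{0,1\}$ matrix and then appeals to a finite computer check; you instead observe that $2^m\,t([0,1]^m)$ coincides with the Hadamard maximal determinant $h(m{+}1)$ and quote the classical values $h(2),h(3),h(4),h(5)$, which is more conceptual and supplies a reason (Barba's bound, or simply integrality combined with your $(m+1)^{(m+1)/2}/2^m$ estimate) where the paper only offers verification. One cosmetic remark: your integrality observation already gives $t([0,1]^4)\le\lfloor 5^{5/2}/16\rfloor=3$, so the appeal to Barba is not strictly needed, though it does no harm.
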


\begin{proof}
It is enough to prove the result for the cube $[0,1]^m$. This is a situation that is similar to a famous problem, see \cite{jb:lc}. Let $A=\begin{pmatrix} 1 & 1 & \cdots & 1 \\ v_1 & v_2 & \cdots & v_{m+1} \end{pmatrix}$ be a matrix that realizes an extremum of the function $\det A$. Suppose at first that one of the vectors $v_j=(a_{2,j},\dots,a_{m+1,j})^t$ has a coordinate $0 < a_{i,j} <1$. We then deduce that
$$
0 = \frac{d}{dx_{i,j}}\det A\left|_{x_{i,j}=a_{i,j}}\right.= (-1)^{i+j}\det A_{i,j}
$$
where $x_{i,j}$ is a variable in position $(i,j)$ in $A$, where the last equality follows by expanding using the $j$-th column and where $A_{i,j}$ is the submatrix $m \times m$ obtained by removing the $i$-th row and the $j$-th column. We deduce that $\det A_{i,j}=0$ so that $\det A$ remains invariant by a modification of the entry $a_{i,j}$. We therefore consider the new matrix $A_1$ for which $a_{i,j}=0$ and all the other entries are the same as in the matrix $A$. We repeat this process until we get to a matrix $A^\prime$ composed only of 0 and 1.

Now, to obtain inequality \eqref{lem4_ineq}, we consider the matrix
$$
B:=\begin{pmatrix} 1 & 0 & \cdots & 0 \\ 1 & 1 & \cdots & 1 \\ x & v_1 & \cdots & v_{m+1}  \end{pmatrix}
$$
where $x=(\frac{1}{2},\dots,\frac{1}{2})^t$. We observe that $\det A=\det B$ and the result follows by subtracting the first column from the others and by using Hadamard's inequality on the rows. The other statements can be verified directly with a computer. The proof is completed.
\end{proof}

\begin{lemma}\label{lem6}
Let $P(x) := x (x-1) \cdots (x-d+1)$ be a polynomial of degree $d \ge 3$. Let also $x_1,\dots,x_n,X$ be positive real numbers satisfying $x_1+\cdots+x_n=X$ and $X \ge dn$. Then,
\begin{eqnarray}\label{lem5.exp}
P(x_1)+\dots+P(x_n) \ge nP\left(\frac{X}{n}\right).
\end{eqnarray}
\end{lemma}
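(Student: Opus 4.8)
The plan is to reduce \eqref{lem5.exp} to the existence of a single supporting line. Put $x_0:=X/n$, so that $x_0\ge d$ by hypothesis, and seek a line $\ell(x)=P(x_0)+P'(x_0)(x-x_0)$ with $\ell(x)\le P(x)$ for every $x\ge 0$. Granting this, summing $P(x_i)\ge\ell(x_i)$ over $i=1,\dots,n$ and using $\sum_i x_i=X=nx_0$ cancels the linear terms and leaves $\sum_i P(x_i)\ge nP(x_0)=nP(X/n)$, which is \eqref{lem5.exp}.

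To build $\ell$ I would first record two elementary facts about $P(x)=\prod_{k=0}^{d-1}(x-k)$. On $[d-1,\infty)$ every factor $x-k$ is nonnegative, so $P''(x)=2\sum_{0\le i<j\le d-1}\prod_{k\ne i,j}(x-k)\ge 0$ there; hence $P$ is convex and $P'$ is nondecreasing on $[d-1,\infty)$. Also $P'(d)=\sum_{i=0}^{d-1}\prod_{k\ne i}(d-k)=d!\sum_{m=1}^{d}\frac1m=:d!\,H_d$, so by monotonicity $P'(x_0)\ge P'(d)=d!\,H_d>(d-1)!$. Convexity of $P$ on $[d-1,\infty)$, with the tangent taken at the interior point $x_0\ge d$, immediately gives $P(x)\ge\ell(x)$ for all $x\ge d-1$, and in particular $\ell(d-1)\le P(d-1)=0$.

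It remains to verify $\ell\le P$ on $[0,d-1]$. Since $\ell(d-1)\le 0$ and $\ell$ has slope $P'(x_0)\ge d!\,H_d$, for $x\le d-1$ one has $\ell(x)=\ell(d-1)-P'(x_0)(d-1-x)\le -d!\,H_d\,(d-1-x)$, so it suffices to show $P(x)\ge -d!\,H_d\,(d-1-x)$ on $[0,d-1]$. This is now a size estimate, handled in two ranges. On a unit subinterval $[m,m+1]\subset[0,d-2]$ the two factors $(x-m)$ and $(x-(m+1))$ contribute at most $1$ in absolute value and the remaining $d-2$ factors contribute at most $(m+1)!\,(d-1-m)!\le d!$ (since $(m+1)+(d-1-m)=d$), so $|P(x)|\le d!$ on $[0,d-2]$, and there $d-1-x\ge 1$ makes the claim clear; for $x\in[d-2,d-1]$ one writes $P(x)=-(d-1-x)\prod_{k=0}^{d-2}(x-k)$ with $0\le\prod_{k=0}^{d-2}(x-k)\le(d-1)!$, whence $P(x)\ge -(d-1-x)(d-1)!\ge -(d-1-x)\,d!\,H_d$. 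This finishes the construction of $\ell$ and hence the lemma.

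The one genuinely non-obvious point is that $P$ is \emph{not} convex on all of $(0,\infty)$ — it oscillates on $[0,d-1]$ — so Jensen does not apply directly; the argument succeeds only because the tangent slope at the average point $x_0\ge d$ is at least $d!\,H_d$, which dominates the $O(d!)$-sized oscillations of $P$ on $[0,d-1]$, and this is where I expect the real work (and the use of $X\ge dn$) to lie. An equivalent, slightly more structural route is to minimize $\sum_i P(x_i)$ over the compact simplex $\{x_i\ge 0,\ \sum_i x_i=X\}$: the first-order conditions force all positive coordinates to share a common value $\Lambda=P'(\cdot)$, at least one coordinate is $\ge d$ so $\Lambda\ge d!\,H_d$, and since (by a Markov-type bound together with $P'(0)=\pm(d-1)!$) this exceeds both $\max_{[0,d-1]}|P'|$ and $|P'(0)|$, no coordinate can lie in $[0,d-1]$; strict monotonicity of $P'$ on $(d-1,\infty)$ then forces every coordinate to equal $X/n$.
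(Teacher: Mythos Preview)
Your proof is correct and takes a genuinely different route from the paper's. The paper argues variationally: it looks for the minimum of $\sum_i P(x_i)$ over the closed simplex, uses the first--order condition $P'(x_1)=\cdots=P'(x_{j+1})$ on the nonzero coordinates, bounds $\max_{[0,d-1]}|P'|\le d!<P'(d)$ to force all coordinates into $[d,\infty)$ where $P'$ is strictly increasing, and then finishes by checking that $t\mapsto tP(X/t)$ is decreasing for $X/t>d-1$. You instead produce a single explicit supporting line: the tangent to $P$ at $x_0=X/n$, and verify directly that it lies below $P$ on all of $[0,\infty)$, reducing \eqref{lem5.exp} to an inequality between functions of one variable that you then check on $[0,d-2]$ and $[d-2,d-1]$ separately.

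Both arguments hinge on the same quantitative point---that the slope at any point $\ge d$ (at least $P'(d)=d!\,H_d$) dominates the oscillations of $P$ on $[0,d-1]$---and you identify this clearly. Your approach has the virtue of being non-variational and entirely constructive: there is no compactness, no critical-point classification, and no separate monotonicity step in $t$; once the tangent inequality $P\ge\ell$ on $[0,\infty)$ is in hand, summation is immediate. The paper's approach, on the other hand, makes the role of the constraint $X\ge dn$ more transparent structurally (it is exactly what prevents a minimizing configuration from having any coordinate in the oscillatory region), and it would adapt more readily if one wanted to locate \emph{all} constrained minima rather than just bound the value. The alternative you sketch in your final paragraph is essentially the paper's argument.
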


\begin{proof}
Clearly $0 \le x_i \le X$ for each $i=1,\dots,n$ and we can assume that $x_1 \ge x_2 \ge \dots \ge x_n$. Let $j+1$ be the number of nonzero values of $x_i$. Suppose that $j \ge 1$ and consider the function
$$
F(z_1,z_2,\dots,z_{j}):=P(z_1)+\dots+P(z_{j})+P(X-z_1-\cdots-z_{j}).
$$
If $F$ reaches a local extremum at $(x_1,\dots,x_j)$, then
$$
{\scriptstyle 0=\frac{d}{dz_{i}}F(z_1,\dots,z_{j})\Big\vert_{z_1=x_1,\dots,z_{j}=x_{j}}=P'(x_i)-P'(X-x_1-\cdots-x_j) \quad (i=1,\dots,j).}
$$
We deduce that
\begin{eqnarray}\label{lem5.1}
P'(x_1)=\dots=P'(x_{j+1}).
\end{eqnarray}
One can establish the inequality
$$
\max_{x \in [0,d-1]} \prod_{\substack{i=0 \\ i \neq k}}^{d-1}|x-i|  \le (d-1)! \qquad (k=0,\dots,d-1).
$$
We deduce that $\max _{x \in [0,d-1]}|P'(x)| \le d!$ and consequently $|P'(x)| < P'(d)$ for each $x \in [0,d)$. For $x \ge d$, the function $P'(x)$ is strictly increasing. Now, since $x_1+\dots+x_{j+1}=X$, we must have $\max_{i}x_i \ge \frac{X}{j+1}$. It follows that if $\frac{X}{j+1}\ge d$, then \eqref{lem5.1} implies that $x_1=\dots=x_{j+1}=\frac{X}{j+1}$. We have therefore shown that the minimum of the left expression in \eqref{lem5.exp} is of the form $(j+1)P\left(\frac{X}{j+1}\right)$ for a value of $j=0,\dots,n-1$. We then notice that
$$
\frac{d}{dt}tP\left(\frac{X}{t}\right)=P\left(\frac{X}{t}\right)-\frac{X}{t}P'\left(\frac{X}{t}\right)<0
$$
if $\frac{X}{t}>d-1$. The proof is thus completed.
\end{proof}

\begin{lemma}\label{lem7}
Let $\mathcal{N}$ be a finite set of points in $\mathbb{R}^m$. Let $\S \subseteq \mathcal{N}$ a subset of maximal cardinality for which
$$
\min_{\substack{v_1,\dots,v_{m+1} \in \S \\ v_i \neq v_j\ for\  i \neq j}} D(v_1,\dots,v_{m+1}) > 0.
$$
Let also $K$ be the maximal number of points in $\mathcal{N}$ that are all included in an hyperplane. Then
$$
\#\mathcal{N} \le K\max\left(1,\binom{S}{m}\right).
$$
\end{lemma}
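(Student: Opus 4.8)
The plan is to argue by cases on the size of $\mathcal{S}$, the geometric content being that a maximal set in general position ``controls'' all of $\mathcal{N}$ through the hyperplanes spanned by its $m$-element subsets.

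First I would dispose of the degenerate case $S \le m$, where $\max(1,\binom{S}{m}) = 1$ and we must show $\#\mathcal{N} = K$. Here I claim the whole of $\mathcal{N}$ lies in a single hyperplane: if not, the affine hull of $\mathcal{N}$ would be all of $\mathbb{R}^m$, so one could select $m+1$ affinely independent points of $\mathcal{N}$; this $(m+1)$-point set trivially satisfies the defining condition of $\mathcal{S}$ (its only $(m+1)$-subset is itself, with $D > 0$), forcing $S \ge m+1$, a contradiction. Hence $\mathcal{N}$ lies in a hyperplane, so $K = \#\mathcal{N}$ and the inequality holds, with equality.

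The main case is $S \ge m+1$, where $\max(1,\binom{S}{m}) = \binom{S}{m}$. Since any $m+1$ distinct points of $\mathcal{S}$ are affinely independent, so is every $m$-element subset $T \subseteq \mathcal{S}$ (a subset of an affinely independent set), and hence $T$ spans a unique affine hyperplane $H_T$; there are $\binom{S}{m}$ of these. The key step is to show that every point $q \in \mathcal{N}$ lies on some $H_T$. If $q \in \mathcal{S}$, complete $\{q\}$ to an $m$-subset $T$ of $\mathcal{S}$ (possible as $S \ge m$) and note $q \in T \subseteq H_T$. If $q \notin \mathcal{S}$, then $\#(\mathcal{S} \cup \{q\}) > S$, so by the maximality of $S$ the set $\mathcal{S} \cup \{q\}$ violates the defining condition: some $m+1$ distinct points of it lie on a common hyperplane $H$. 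As no $m+1$ distinct points of $\mathcal{S}$ alone can do this, the offending configuration must contain $q$; thus there are distinct $v_1,\dots,v_m \in \mathcal{S}$ with $v_1,\dots,v_m,q \in H$. Since $v_1,\dots,v_m$ are affinely independent, $H$ is the unique hyperplane through them, so $H = H_T$ with $T = \{v_1,\dots,v_m\}$ and $q \in H_T$.

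To finish, I would write $\mathcal{N} = \bigcup_T (\mathcal{N} \cap H_T)$, the union over the $\binom{S}{m}$ sets $T$, and use that each $H_T$ contains at most $K$ points of $\mathcal{N}$ by the definition of $K$, giving $\#\mathcal{N} \le K\binom{S}{m}$. I do not anticipate a genuine technical obstacle: the argument is elementary affine geometry, and the only places requiring care are the correct use of ``maximal cardinality'' (to force $\mathcal{S} \cup \{q\}$ to be bad when $q \notin \mathcal{S}$) and the bookkeeping in the regime $S \le m$, where the stated bound collapses to $\#\mathcal{N} = K$ and must be recognized as such.
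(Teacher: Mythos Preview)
Your argument is correct and follows essentially the same route as the paper's own proof: handle the degenerate case $S\le m$ by observing that $\mathcal{N}$ then lies in a single hyperplane, and in the main case cover $\mathcal{N}$ by the $\binom{S}{m}$ hyperplanes spanned by $m$-subsets of $\mathcal{S}$, invoking maximality to force any $q\notin\mathcal{S}$ onto one of them. Your write-up is in fact more careful than the paper's (you explicitly treat the points of $\mathcal{S}$ itself and spell out the $S\le m$ case), but the underlying idea is the same.
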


\begin{proof}
If such a set $\S$ does not exist, then we have $\#\mathcal{N} \le K$. Otherwise, since $\S$ is a set of maximal cardinality, it follows that each point $v \in \mathcal{N} \setminus \S$ is included in an hyperplane defined by at least one set of $m$ points of $\S$. There are $\binom{S}{m}$ such sets of $m$ points. By hypothesis, each of these sets defines a distinct hyperplane and then each such hyperplane contains at most $K$ points of $\mathcal{N}$. The result follows.
\end{proof}

\section{Proof of Theorem \ref{thm:1}}

Let's denote by $x_1,\dots,x_{S}$ the ordered list of numbers in $\S$. We then consider the product
$$
\Delta:= \prod_{1 \le i < j \le S}(x_j-x_i).
$$
On the one hand, using Lemma \ref{lem1}, we have
\begin{eqnarray*}
\Delta & = & M^{\binom{S}{2}}\prod_{1 \le i < j \le S}\left(\frac{x_j-x_i}{M}\right)\\
& \le & M^{\binom{S}{2}}\max_{0 \le \xi_1 \le \dots \le \xi_S \le 1}\prod_{1 \le i < j \le S}(\xi_j-\xi_i)\\
& = & (c_S M)^{\binom{S}{2}}.
\end{eqnarray*}
On the other hand, let's fix an integer $q \in \mathcal{Q}$ and partition the set $\S$ into the $\nu(q)$ disjoint subsets $\S_r$ that contain the ordered set of numbers $x_{r,1},\dots,x_{r,S_r}$ from $\S$ that belong to the same congruence class modulo $q$. We then write
$$
\Delta_r:= \prod_{1 \le i < j \le S_r}(x_{r,j}-x_{r,i})
$$
and notice that
$$
q^{\binom{S_1}{2}+\cdots+\binom{S_{\nu(q)}}{2}} \mid \Delta_1 \cdots \Delta_{\nu(q)} \mid \Delta.
$$
Now, we find
\begin{eqnarray*}
\sum_{r=1}^{\nu(q)}\binom{S_r}{2} & = & \frac{1}{2}\sum_{r=1}^{\nu(q)}S^2_r-\frac{S}{2}\\
& \ge & \frac{S^2}{2\nu(q)}-\frac{S}{2}
\end{eqnarray*}
using Cauchy-Schwarz's inequality. Since the values of $q \in \mathcal{Q}$ are pairwise coprime, we get to
$$
\prod_{q \in \mathcal{Q}}q^{\frac{S^2}{2\nu(q)}-\frac{S}{2}} \mid \Delta \le (c_S M)^{\binom{S}{2}}.
$$
The result easily follows.

\section{Proof of Theorem \ref{thm:2}}

For a fixed $m \ge 2$, we write the sequence of integer points in $\S$ as $v_1, \dots, v_{S}$ and consider the product
$$
\Delta := \prod_{1 \le i_1 < \dots < i_{m+1} \le S}D(v_{i_1},\dots,v_{i_{m+1}}).
$$
Clearly,
$$
\Delta \le t(\Omega)^{\binom{S}{m+1}}.
$$

Now, let's fix a lattice $\Gamma \in \L$ and partition the set $\S$ into the $\nu(\Gamma)$ disjoint subsets $\S_r$ that contain the set of integer points $v_{r,1},\dots,v_{r,S_r}$ from $\S$ that belong to the same equivalence class of $\Z^m / \Gamma$. We then define
$$
\Delta_r :=\prod_{1 \le i_1 < \dots < i_{m+1} \le S_r}D(v_{r,i_1},\dots,v_{r,i_{m+1}}).
$$
and notice that
$$
|\Gamma|^{\binom{S_1}{m+1}+\cdots+\binom{S_{v(\Gamma)}}{m+1}} \mid \Delta_{1}\cdots\Delta_{v(\Gamma)} \mid \Delta.
$$
From Lemma \ref{lem6} and the hypothesis $S \ge (m+1)v(\Gamma)$ (otherwise \eqref{thm_2} is trivial), we get
$$
|\Gamma|^{v(\Gamma)\binom{S/v(\Gamma)}{m+1}} \mid \Delta.
$$
By assumption the values of $|\Gamma|$ are pairwise coprime and the inequality $S \ge (m+1)v(\Gamma)$ holds for each $\Gamma$. We deduce that
$$
\prod_{\Gamma}|\Gamma|^{v(\Gamma)\binom{S/v(\Gamma)}{m+1}} \le t(\Omega)^{\binom{S}{m+1}}.
$$
We take the logarithm and send everything to the left hand side. We get the inequality
$$
a_{m}b_{m}S^{m}-a_{m-1}b_{m-1}S^{m-1}+\cdots+(-1)^{m}a_0b_0  \le 0
$$
where
$$
x(x-1)\cdots(x-m)=a_{m}x^{m+1}-a_{m-1}x^m+\cdots(-1)^{m}a_0x
$$
and where
$$
b_i:=\sum_{\Gamma \in \L}\frac{\log |\Gamma|}{v(\Gamma)^i}-\log t(\Omega).
$$
The hypothesis $b_m > 0$ implies that $b_i > 0$ for each $i=0,\dots,m$. We deduce that
$$
S \le \left\lfloor\frac{m+1}{2}\right\rfloor \max_{\substack{1 \le s \le m \\ 2 \nmid s}}\frac{a_{m-s}b_{m-s}}{b_{m}S^{s-1}}
$$
from which the result follows after a simple computation.

\section{Proof of Theorem \ref{thm:3}}

From the proof of Lemma 2.5 of \cite{pl}, we know that we can assume that $P(x)=\prod_{j=1}^{n}(x-a_j)$. Also, we can assume that $a_1=0$. 

Step 1: We have an integer $q \ge 2$, a polynomial $P(x)$ of degree $n$ and an interval $\I$ of length $\le q^{1/n}$. We want to find an upper bound for the number of solutions $W$ to the system \eqref{thm_3-sys}. Let's fix a prime power $q_1 = p^{\alpha} \| q$ for which $q_1 \ge q^{1/\omega(q)}$. We consider two cases.

Case 1: The solutions $\W$ to \eqref{thm_3-sys} are in exactly $2 \le t \le n$ congruence classes modulo $p$. Consider a congruence class, say $\ell \pmod{p}$, that has the most solutions, a set we denote by $\W'$. We have $\#\W \le ts$ where $\#\W'=s$. We can assume that $s \ge 2$ since otherwise \eqref{thm3-res} holds. Let's define the polynomial
$$
P_\ell(x):=\prod_{\substack{1 \le j \le n \\ a_j \equiv \ell \pmod{p}}}(x-a_j).
$$
We remark that $P_\ell(x)$ is of degree at most $n+1-t$. Now, we write the solutions in $\W'$ as $x_1 < \cdots < x_s$ and define
$$
\Delta := \prod_{1 \le i < j \le s}(x_j-x_i).
$$
Clearly, $\Delta \le q^{\frac{s^2-s}{2n}}$. Also, using Lemma \ref{lem4} for the polynomials $P(x)$ and $P_\ell(x)$, we get
$$
q_1^{\frac{s^2}{2(n+1-t)}-\frac{s}{2}}\left(\frac{q}{q_1}\right)^{\frac{s^2}{2n}-\frac{s}{2}}=q_1^{\frac{s^2}{2}\frac{t-1}{n(n+1-t)}}q^{\frac{s^2}{2n}-\frac{s}{2}} \mid \Delta.
$$
We deduce that
$$
q^{\frac{s^2}{2\omega(q)}\frac{t-1}{n(n+1-t)}}q^{\frac{s^2}{2n}-\frac{s}{2}} \le q_1^{\frac{s^2}{2}\frac{t-1}{n(n+1-t)}}q^{\frac{s^2}{2n}-\frac{s}{2}} \le q^{\frac{s^2-s}{2n}}
$$
so that
$$
W \le ts \le \frac{t}{t-1}\left(1-\frac{1}{n}\right)n(n+1-t)\omega(q)
$$
and the result follows.

Case 2: The solutions $\W$ to \eqref{thm_3-sys} are in only one congruence class modulo $p$. In this case, since $P(0) \equiv 0 \pmod{q}$, we have that this class is $0 \pmod{p}$. Also, we must have $p \mid a_i$ for $i=1,\dots,n$. Writing $x=pz$, we get
$$
P(x) \equiv 0 \pmod{q} \quad \Longrightarrow \quad P_1(z) \equiv 0 \pmod{\frac{q}{p^{\min(\alpha,n)}}}
$$
where $P_1(z)=\prod_{j=1}^{n}(z-a_{j,1})$ and $a_{j,1}=\frac{a_j}{p}$. We have thus transformed our problem into another one with the integer $q'=\frac{q}{p^{\min(\alpha,n)}}$, the polynomial $P_1(x)$ and an interval of length $\frac{q^{1/n}}{p} \le q'^{1/n}$.

Step 2: If $q' \ge 2$ we return to Step 1 with $q'$ instead of $q$, $P_1(x)$ instead of $P(x)$ and $\I_1$ of length $\le q'^{1/n}$ instead of $\I$. If we are not in Case 1 at some stage, we will get to $q'=1$ and $\I_1$ of length at most $1$ so that $W \le 2$. The proof is completed.

\begin{remark}
We can also proceed as in the proof of Theorem \ref{thm:1} to find an upper bound for $W$. We write the solutions of \eqref{thm3-res} as $x_1 < \cdots < x_W$ and define
$$
\Delta := \prod_{1 \le i<j \le W}(x_j-x_i).
$$
Proceeding as usual and using Lemma \ref{lem4}, we get to
$$
q^{\frac{W^2}{2n}-\frac{W}{2}} \mid \Delta \le (c_W q^{\frac{1}{n}})^{\frac{W^2-W}{2}}.
$$
Thus we have
$$
\left(\frac{1}{c_W}\right)^{W-1} \le q^{1-\frac{1}{n}}
$$
and we deduce from Lemma \ref{lem2} that
$$
W-\frac{\log W}{\log 4}-\frac{\log W}{4W\log 4} \le \left(1-\frac{1}{n}\right)\frac{\log q}{\log 4}+\frac{3}{2}.
$$
Now, we write $F(x):=x-\frac{\log x}{\log 4}-\frac{\log x}{4x\log 4}$ and show that $F'(x)>0$ for $x \ge 1$ and that
$$
F\left(x+\frac{\log x}{\log 4}+\frac{2}{3}\right) \ge x \qquad (x \ge \frac{7}{4}).
$$
From there we get
$$
W < \left(1-\frac{1}{n}\right)\frac{\log q}{\log 4}+\frac{\log\log q}{\log 4}+3.
$$
\end{remark}

\section{Proof of Theorem \ref{thm:4}}

We can assume that $d \in \{1,\dots,q\}$. We first show that it is enough to prove the theorem with the supplementary assumption $\gcd(d,q)=1$. Indeed, assume that $\gcd(d,q)=r$. Let's define the function
$$
\gamma_n(r):=\prod_{p^{\alpha} \| r}p^{\lceil\frac{\alpha}{n}\rceil}.
$$
Each solutions $x \in \I$ of \eqref{thmcong} must also satisfy $\gamma_n(r) \mid x$, Thus, by writing $x=\gamma_n(r)z$, we get to the congruence
$$
\gamma_n(r)^nz^n+d \equiv 0 \pmod{q} \quad \Longrightarrow \quad \frac{\gamma_n(r)^n}{r}z^n+\frac{d}{r} \equiv 0 \pmod{\frac{q}{r}}.
$$

Case 1: $\frac{q}{r} > 1$. If $\gcd\bigl(\frac{\gamma_n(r)^n}{r},\frac{q}{r}\bigr)>1$ then we have $W=0$ and otherwise we multiply the above equation by the multiplicative inverse of $\frac{\gamma_n(r)^n}{r} \pmod{\frac{q}{r}}$ and retrieve a polynomial of the shape \eqref{form}. We remark that $z$ is in an interval of length at most $\frac{q^{1/n}}{\gamma_n(r)}\le \left(\frac{q}{r}\right)^{1/n}$ so that we have transformed the original problem into a problem that has the desired property.

Case 2: $\frac{q}{r} = 1$. In this case, since   $z$ is in an interval of length at most $\frac{q^{1/n}}{\gamma_n(r)}\le \left(\frac{q}{r}\right)^{1/n}=1$, we have at most two solutions and \eqref{thm_4} holds.

We are now ready to prove \eqref{thm_4} under the hypothesis $\gcd(d,q)=1$. We begin with the case $\omega(q) \ge 2$. Let $\W$ be the set of solutions to the equation \eqref{thmcong}. For each prime $p$ with $p^\alpha \| q$ we denote by $v_p$ the number of solutions to the equation $P(x) \equiv 0 \pmod{p^\alpha}$. Suppose at first that there is a prime number $p$ for which $p^\alpha \| q$ and $p^{\alpha} > q^{\frac{1}{\omega(q)+1}}$ ($p^{\alpha} = q^{\frac{1}{\omega(q)+1}}$ is impossible). From Lemma \ref{lem3}, the numbers $x \in \W$ are in at most $v_p \le n$ congruence classes modulo $p^{\alpha}$. Let's denote by $\W^\prime$ the set of solutions $x \in \W$ that are in one of the most popular congruence classes modulo $p^{\alpha}$. We write $s:=\#\W^\prime$, so that $W \le ns$. Now, set $q_1:=p^\alpha$ and $q_2:=\frac{q}{p^\alpha}$ and consider the product
$$
\Delta:=\prod_{\substack{x_1 < x_2 \\ x_1,x_2 \in \W^\prime}}(x_2-x_1)
$$
From Lemma \ref{lem4}, if $s \ge 2$, then $q_2^{\frac{s^2}{2n}-\frac{s}{2}} \mid \Delta$. Also, since $q_1^{\binom{s}{2}} \mid \Delta$, we must have
$$
q_1^{\binom{s}{2}}q_2^{\frac{s^2}{2n}-\frac{s}{2}} \le q^{\frac{1}{n}\binom{s}{2}}=(q_1q_2)^{\frac{1}{n}\binom{s}{2}},
$$
thus $q_1 \le q_2^{\frac{1}{s-1}}$. However, $q_1 > q^{\frac{1}{\omega(q)+1}} \Rightarrow q_1 > q_2^{\frac{1}{\omega(q)}}$. We deduce that
$$
q_2^{\frac{1}{\omega(q)}} < q_1 \le q_2^{\frac{1}{s-1}}
$$
so that $s \le \omega(q)$ and $W \le n\omega(q)$ if $s \ge 2$ and $W \le n$ otherwise.

Now, if such a prime number does not exist, it is because $2^\alpha \| q$ with $2^\alpha > q^{\frac{2}{\omega(q)+1}}$. The rest of the argument is similar except that $v_2 \le 2n$, so that $W \le 2ns$. If $s \ge 2$, we still come to the conclusion $q_1 \le q_2^{\frac{1}{s-1}}$ except that now $q_1 > q^{\frac{2}{\omega(q)+1}} \Rightarrow q_1 > q_2^{\frac{2}{\omega(q)-1}}$. We deduce that $s \le \frac{\omega(q)}{2}$, so that $W \le n\omega(q)$ if $s \ge 2$ and $W \le 2n$ otherwise. We have established \eqref{thm_4} in the case $\omega(q) \ge 2$.

We now assume that $\omega(q)=1$. Since $q^{1/n} \le q$, we deduce from Lemma \ref{lem3} that $q=2^\alpha$ for some $\alpha \ge 3$. Then, again from Lemma \ref{lem3}, we deduce that $n=2^k$ for some $k \ge 1$.

We first consider the case $n=2$. One can show with the help of the representation $x \equiv (-1)^a5^b \pmod{2^\alpha}$ (see the proof of Lemma \ref{lem3}) that if the equation \eqref{thmcong} has a solution, then it has 4 solutions and they are of the form $x \equiv \pm (2^{\alpha-1}+1)z \pmod{2^\alpha}$ for some $z \in \{1,\dots,2^{\alpha-2}-1\} \pmod{2^{\alpha}}$. The result \eqref{thm_4} follows from $\lfloor 2^{\alpha/2} \rfloor \le 2^{\alpha-2}$ for $\alpha \ge 3$.

We now turn to the case $n=2^k$ for some $k \ge 2$. Let's write
\begin{eqnarray*}
\T_1 & := & \{1,\dots,2^{\alpha-1}-1\} \pmod{2^\alpha}\\
\T_2 & := & \{2^{\alpha-1}+1,\dots,2^{\alpha}-1\} \pmod{2^\alpha}.
\end{eqnarray*}
Again, since every solution $x \in \T_1$ to \eqref{thmcong} has its associated solution $-x \in \T_2$, we deduce that \eqref{thm_4} holds if all the odd numbers in $\I$ are included in one of $\T_1$ or $\T_2$. If it is not the case, then since
$$
(x+2^{\alpha-2})^{2^k} \equiv x^{2^k} \pmod{2^\alpha}\qquad (k \ge 2,\ \alpha \ge 3),
$$
we deduce that the number of solutions to \eqref{thmcong} is the same with $\I$ replaced by $\I':=\I+2^{\alpha-2}$. Now, since $2^{\alpha/4} < 2^{\alpha-2}$ for $\alpha \ge 3$, we must have that all the odd numbers in $\I'$ are included in one of $\T_1$ or $\T_2$. The proof is completed.

\section{Concluding remarks}

It is interesting to consider Theorem \ref{thm:2} with $m = 2$. Let $\alpha:=a+bi$ with $a,b \in \mathbb{Z}$. The ideal $(\alpha) \subseteq \mathbb{Z}[i]$ can also be seen as the lattice $\Gamma$ generated by $v_1:=(a,b)$ and $v_2:=(-b,a)$ in $\mathbb{Z}^2$. The fundamental domain of $\Gamma$ is a square of area $N(\alpha)$ with the base $v_1,v_2$, precisely $\{\lambda_1 v_1+\lambda_2 v_2:\ \lambda_1,\lambda_2 \in [0,1)\}$. Here and throughout, the norm is $N(z_1+z_2 i) = z^2_1+z^2_2=\| z_1+z_2 i\|^2$ ($z_1,z_2 \in \mathbb{R}$) as usual.

Assume that we have a bounded set $\Omega \in \mathbb{R}[i]$ of diameter, defined by
$$
d(\Omega):=\sup_{\alpha_1,\alpha_2 \in \Omega}(N(\alpha_2-\alpha_1))^{1/2},
$$
nonzero. Assume also that we have a set $\S$ that contains $x_1,\dots,x_S$ elements of $\mathbb{Z}[i]$ and that we have a set $\mathcal{Q}$ of pairwise coprime elements of $\mathbb{Z}[i]$ such that for each $q \in \mathcal{Q}$ the elements of $\S$ are in at most $\nu(q)$ of the $N(\alpha)$ equivalence classes of $\mathbb{Z}[i]/(\alpha)$. Then, considering
$$
\Delta:=\prod_{1 \le i < j \le S}N(x_j-x_i)
$$
and arguing as in the proof of Theorem \ref{thm:1} we get to
$$
S \le \frac{\sum_{q \in \mathcal{Q}} \log N(q)-2\log d(\Omega)}{\sum_{q \in \mathcal{Q}}\frac{\log N(q)}{\nu(q)} - 2\log d(\Omega)}
$$
provided that the denominator is positive.

It is a refinement of Theorem \ref{thm:2} in a very special case. We are not aware of this kind of generalization in $\mathbb{R}^m$ for any $m \ge 3$.

\end{document}